\numberwithin{equation}{section}
\newtheorem {theorem}{Theorem}[section]
\newtheorem {proposition}[theorem]{Proposition}
\newtheorem {lemma}[theorem]{Lemma}
\newtheorem {corollary}[theorem]{Corollary}
\theoremstyle{definition}
\theoremstyle{theorem}
	\newtheorem {remark}[theorem]{Remark}
\newcommand{\Var}{\operatorname{Var}}
\newcommand{\Cov}{\operatorname{Cov}}
\newcommand{\dint}{\textup{d}}
\def\1{\mathbf{1}}
\def\EE{\mathbb{E}}
\def\NN{\mathbb{N}}
\def\RR{\mathbb{R}}
\def\cB{\mathcal{B}}
\def\cE{\mathcal{E}}
\def\cV{\mathcal{V}}
\def\ll{\langle \!\langle}
\def\gg{\rangle\!\rangle}
\let\@fnsymbol\@alph
\begin{document}

	%-------------------------------------------------------------------------------------------
	
	\title{\bfseries Second-order properties for planar Mondrian tessellations}
	
	\author{Carina Betken\footnotemark[1],\; Tom Kaufmann\footnotemark[2],\; Kathrin Meier\footnotemark[3]\;\; and Christoph Th\"ale\footnotemark[4]}
	
	%\rhead{Concentration inequalities for Poisson cylinder processes}
	
	\date{}
	\renewcommand{\thefootnote}{\fnsymbol{footnote}}
	\footnotetext[1]{Ruhr University Bochum, Germany. Email: carina.betken@rub.de}
	
	\footnotetext[2]{Ruhr University Bochum, Germany. Email: tom.kaufmann@rub.de}
	
	\footnotetext[3]{Ruhr University Bochum, Germany. Email: kathrin.meier@rub.de}
	
	\footnotetext[4]{Ruhr University Bochum, Germany. Email: christoph.thaele@rub.de}
	
	\maketitle
	\begin{abstract} \noindent In this paper planar STIT tesselations with weighted axis-parallel cutting directions are considered. They are known also as weighted planar Mondrian tesselations in the machine learning literature, where they are used in random forest learning and kernel methods. Various second-order properties of such random tessellations are derived, in particular, explicit formulas are obtained for suitably adapted versions of the pair- and cross-correlation functions of the length measure on the edge skeleton and the vertex point process. Also, explicit formulas and the asymptotic behaviour of variances are discussed in detail.\\[2mm]
		{\bf Keywords}. Cross-correlation function, Mondrian tessellation, pair-correlation function, STIT tessellation, stochastic geometry, variance asymptotic\\
		{\bf MSC}. 60D05.
	\end{abstract}
	
	\section{Introduction}
	
	Let $W\subset\RR^2$ be a convex polygon, fix some time parameter $t>0$, and let $\Lambda$ be a locally finite and translation-invariant measure on the space of lines in the plane. A random STIT tessellation in the window $W$ can be constructed by the following random process. We start by assigning to $W$ a random lifetime, which is exponentially distributed with parameter $\Lambda([W])$, where $[W]$ denotes the set of lines having non-empty intersection with $W$. When the lifetime of $W$ has expired and is less or equal to $t>0$, a random line $L$ that hits $W$ is selected according to the distribution $\Lambda(\,\cdot\, \cap [W])/\Lambda([W])$ and splits $W$ into two smaller polygons $W\cap L^+$ and $W\cap L^-$, where $L^\pm$ are the two closed half-spaces bounded by $L$. The construction is now repeated independently within $W\cap L^+$ and $W\cap L^-$ until the time threshold $t$ is reached. The STIT tessellation of $W$ with lifetime $t>0$ is understood to be the union of the splitting edges constructed as above until time $t>0$, and is denoted as $Y_t(W)$ (see Figure \ref{fig:Construction}). These edges are referred to as maximal edges of $Y_t(W)$. For a formal construction of STITs we refer the reader to  \cite{mecke2008global, NW2005}.
	It has been shown in \cite{NW2005}, that as a consequence of the choice of the exponential lifetime distributions, the construction just described can be consistently extended into the whole plane. This means that there exists a random tessellation $Y_t$ of $\RR^2$ with the property that its restriction to a polygon $W$ has the same distribution as $Y_t(W)$. We also note that the random tessellation $Y_t$ is stationary and locally finite (with the latter meaning that any compact set in $\RR^2$ will almost surely be tessellated by $Y_t$ into an at most  finite number of subsets), since the line measure $\Lambda$ has been assumed to be translation-invariant and locally finite. For a STIT tessellation $Y_t(W)$ we denote by $\mathcal{E}_t$ the random edge length measure on $Y_t(W)$ and by $\mathcal{V}_t$ the related vertex process, i.e., the point process of  intersection points of edges in $Y_t(W)$. 
	
	In the existing literature special attention has been payed to planar STIT tessellation in the so-called isotropic case, as well as their higher-dimensional analogues. The isotropic case appears if the line measure $\Lambda$ is taken to be not only translation-invariant but also invariant under rotations, that is, $\Lambda$ is a constant multiple of the unique Haar measure on the space of lines. The reason for this particular choice is that only in this case one can rely on classical integral geometry and obtain the most explicit 
	\begin{figure}[H]
		\centering
		\includegraphics[scale=0.45]{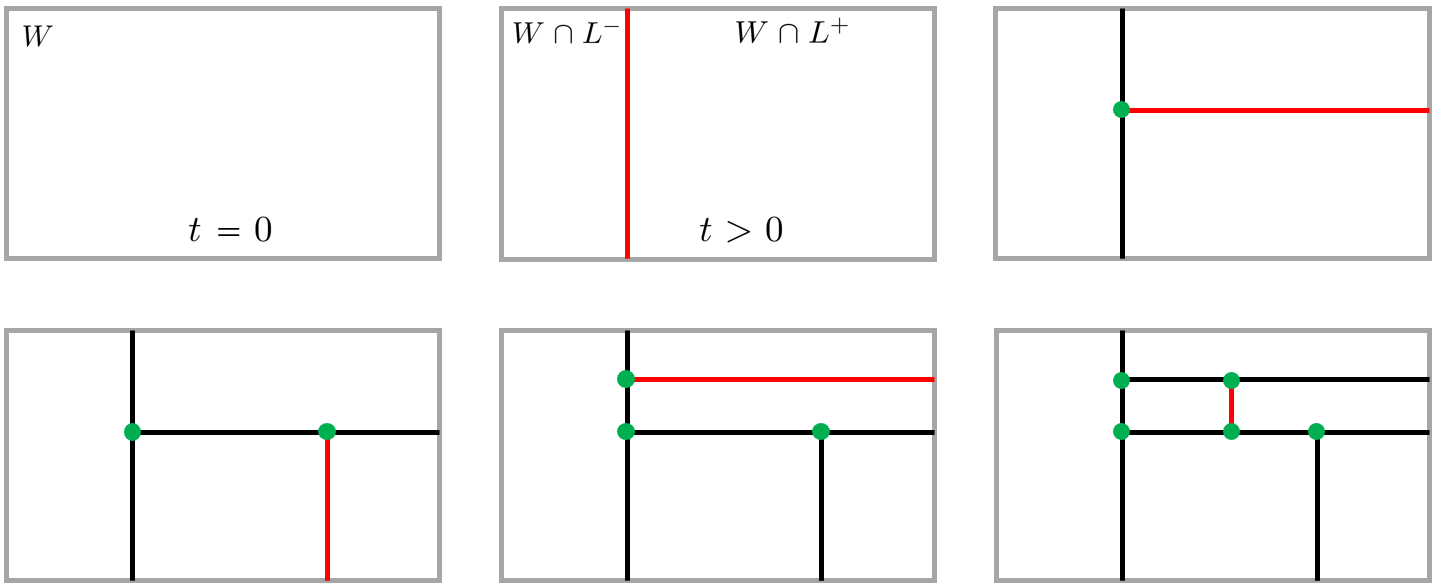}
		\caption{Construction in a rectangular window $W$  of an axis-parallel STIT or (weighted) Mondrian tessellation. The splitting edges are shown in red, the intersection points in green.}
		%
	    %\includegraphics[scale=0.45]{Images/ConstructionUpdatedBeschriftet-SW-Neu1.png}
		%\caption{Construction in a rectangular window $W$  of an axis-parallel STIT or (weighted) Mondrian tessellation.The splitting edges are shown by dashed lines, the intersection points by black dots.}
		\label{fig:Construction}
	\end{figure}
	formulas for second-order parameters related to such tessellations, see for instance \cite{RT13} for second order properties of different types of tessellations, including the isotropic case of planar STIT tessellations.  
	
	On the other hand, it has turned out that there is another particular class of STIT tessellations, called Mondrian tessellations, for which a second-order description is desirable, since such tessellations have found numerous applications in machine learning. Reminiscent of the famous paintings of the Dutch modernist painter Piet Mondrian, the eponymous tessellations are a version of STIT tessellations with only axis-parrallel cutting directions. Originally established by Roy and Teh \cite{RoyTehMondrian}, Mondrian tessellations have been shown to have multiple applications in random forest learning \cite{LRT2014, LRT2016} and kernel methods \cite{Balog2016}. Both random forest learners and random kernel approximations based on the Mondrian process have shown significant results, especially as they are substantially more adapted to online-learning (i.e., the ability to incorporate new data into an existing model without having to completely retrain it) than many other of their tessellation-based counterparts. This is due to the self-similarity of Mondrian tessellations, which stems from their defining characteristic of being iteration stable (see \cite{NW2005}), and allows to obtain explicit representations for many conditional distributions of Mondrian tessellations. This property allows a tessellation-based learner to be re-trained on new data without having to newly start the training process and is thus considerably more efficient on large data sets. These methods have recently been carried over back to their origin in stochastic geometry, i.e., to general STIT tessellations \cite{Ge2019, Oreilly2020}.
	
	Formally, by a weighted planar Mondrian tessellation we understand a planar STIT tessellation whose driving line measure $\Lambda$ is concentrated on a set of lines having axis-parallel directions, one with weight $p\in(0,1)$, the other with weight $(1-p)$ (see Figure \ref{fig:WeightedMondrian}). Such tessellations are in the focus of the present paper. While mean values for Mondrian and more general STIT tessellations can be derived by means of classical translative integral geometry, for second-order properties this is only the case for isotropic STIT tessellations, since in this case integralgeometric methods with respect to the full group of rigid motions become an essential tool in the analysis. The purpose of this paper is to close this gap partially and to study second-order properties of weighted planar Mondrian tessellations.
	
	In the following Section \ref{sec:MainResults} our main results will be presented. We will provide variance formulas depending on the parameter $p$ for both the number of maximal edges in $Y_t(W)$ and the weighted total edge length of $Y_t(W)$. We will then present what we call the Mondrian pair-correlation functions for $\mathcal{E}_t$ and $\mathcal{V}_t$ and their (Mondrian) cross-correlation function. As will be explained in more detail in the relevant section, by Mondrian pair-correlation functions we refer to versions of the classic pair-correlation function as the derivative of Ripley's K-Function, which has been adapted to the non-isotropic nature of the underlying driving measure (analogously for the cross-correlation function). The remaining sections of this paper are then dedicated to proving the main results, with Section \ref{sec:Variances} proving the variance results, Section \ref{sec:Aux} providing some auxiliary results, Section \ref{sec:EdgePair} and Section \ref{sec:VertexPair} deriving the Mondrian pair-correlation functions for $\mathcal{E}_t$ and $\mathcal{V}_t$, respectively, and Section \ref{sec:CrossCorr} showing their (Mondrian) cross-correlation function.

\begin{figure}
	\centering
	\begin{minipage}{0.32\textwidth}
		\centering
		\includegraphics[width=0.95\textwidth]{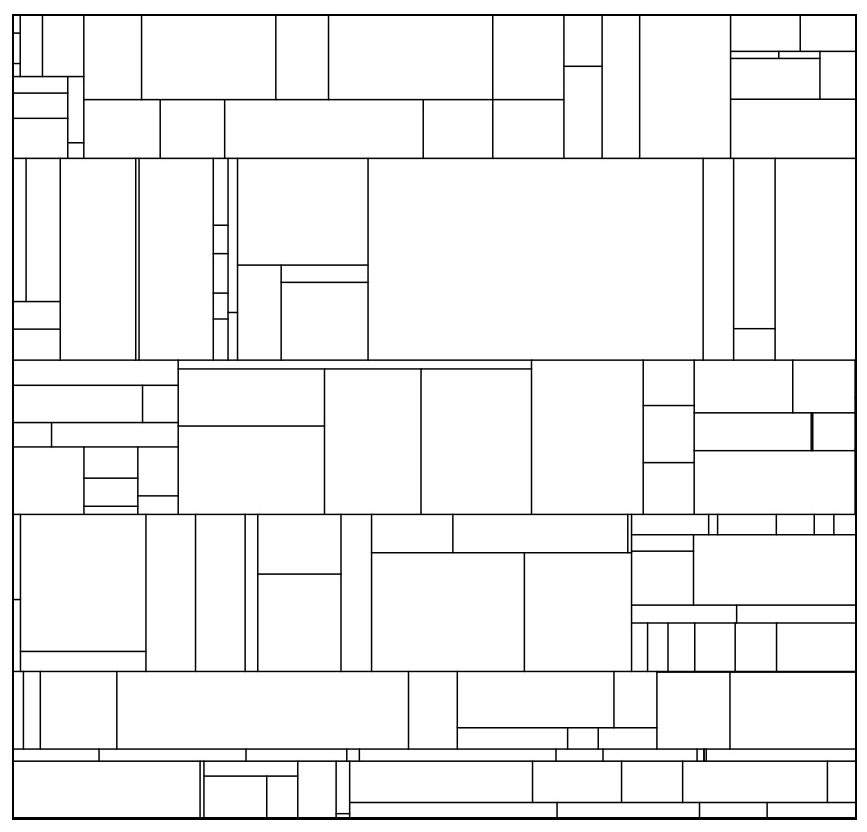} 
	\end{minipage}\hfill
	\begin{minipage}{0.32\textwidth}
		\centering
		\includegraphics[width=0.95\textwidth]{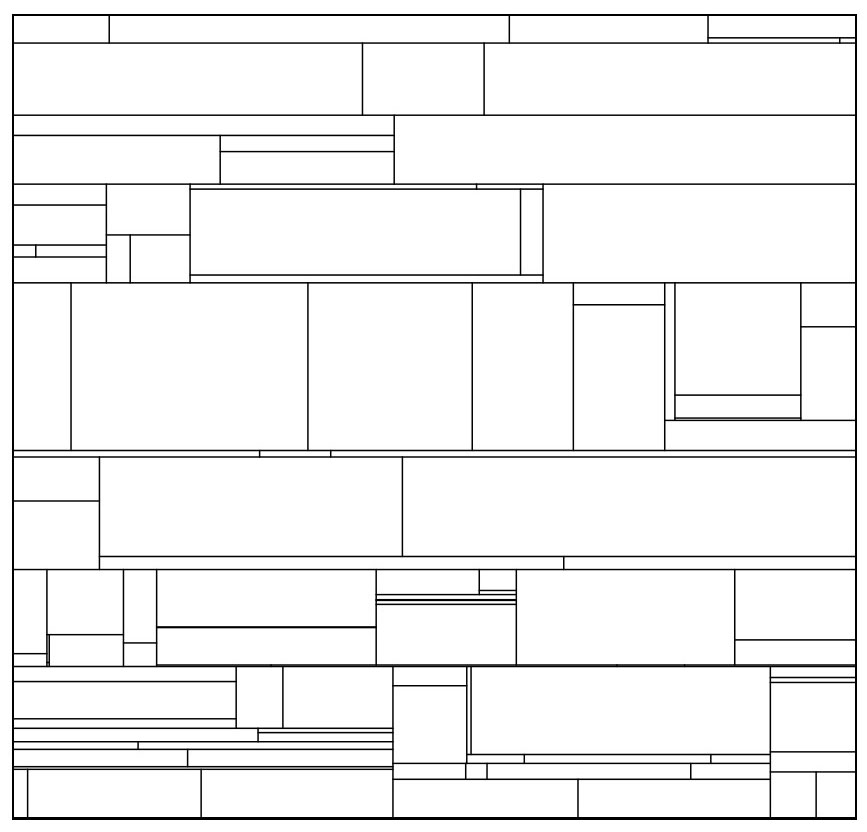} 
	\end{minipage}\hfill 
	\begin{minipage}{0.32\textwidth}
		\centering
		\includegraphics[width=0.95\textwidth]{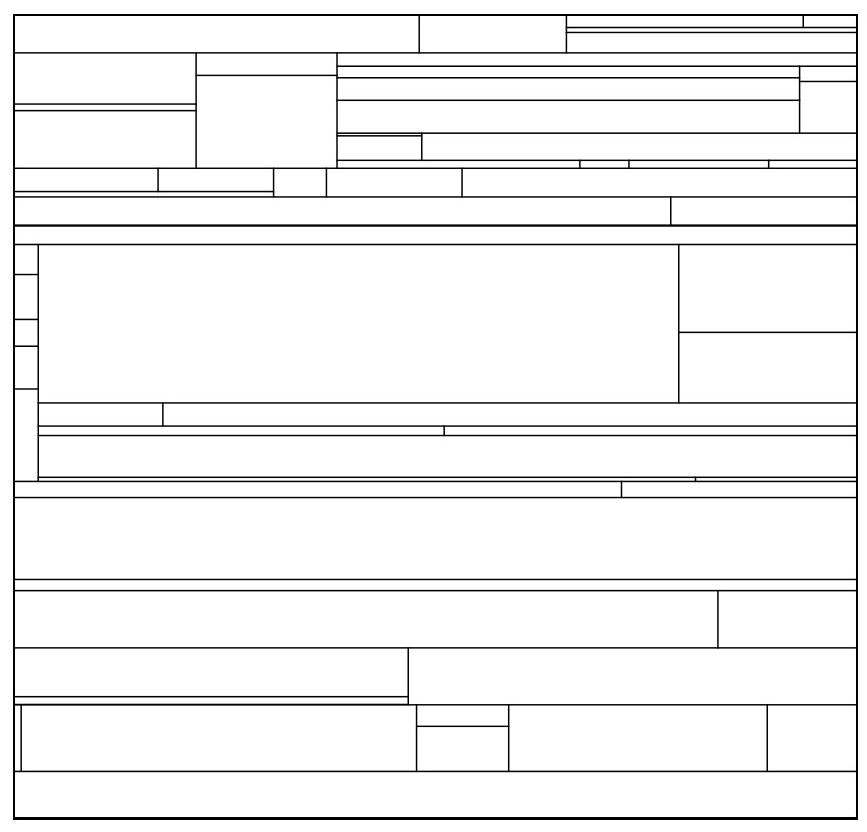} 
	\end{minipage}
	\caption{Planar Mondrian tesselations on $[0,1]^2$ with $p=0.5$, $p=0.75$, and $p=0.9$, (from left to right).}
	\label{fig:WeightedMondrian}
\end{figure}

\section{Main results}\label{sec:MainResults} 

\subsection{Notation} \label{subsec:Notation}

Let $\RR$ denote the real line and $\RR^2$ the Euclidean plane equipped with their {respective} Borel $\sigma$-fields, and write $\ell_1(\cdot)$ and $\ell_2(\cdot)$ {for the corresponding} Lebesgue measure on each space. Given a topological space $\mathbb{X}$ and a measure $\mu$ on $\mathbb{X}$, we denote by $\mu^{\otimes k}$, $k\in\NN$, its $k$-fold product measure. Further, for a set $A \subset \mathbb{X}$ we write ${\bf 1}_A( \cdot)$ for its indicator function and $\mathfrak{B}(\mathbb{X})$ the Borel $\sigma$-field on $\mathbb{X}$. For a random element $X$ in $(\mathbb{X}, \mathfrak{B}(\mathbb{X}))$ we write $\mathbb{P}_X$ for its distribution, and denote by $X \overset{d}{=} Y$ equality in distribution of two $\mathbb{X}$-valued random elements $X,Y$, i.e., \ $\mathbb{P}_X=\mathbb{P}_Y$. 

Let $[\RR^2]$ be the space of lines in $\RR^2$. Equipped with the Fell topology, $[\RR^2]$ carries a natural Borel $\sigma$-field $\mathfrak{B}([\RR^2])$, see \cite[Chapter A.3]{PenroseLast}. Further, define $[\RR^2]_0$ to be the space of all lines in $\RR^2$ passing through the origin. For a line $L\in [\RR^2]$, we write $L^+$ and $L^-$ for the positive and negative half-spaces of $L$, respectively, and $L^\perp$ for its orthogonal line passing through the origin. 
For a compact set $A\subset \RR^2$ define 
$$
[A]:= \{ L \in [\RR^2] : L \cap A\neq \emptyset\} \in \mathfrak{B}([\RR^2])
$$
to be the set of all lines in the plane that have non-empty intersection with $A$. We set  $\mathcal{S}(A)$ and $\mathcal{S}(\RR^2)$ to be the finite line segments within $A$ and $\RR^2$, respectively, and denote the line segment between two points $x,y \in \RR^2$ as $\overline{x,y}$.  Furthermore, for lines $L, L_1, L_2 \in [\RR^2]$ with $\emptyset \ne L\cap L_1 \ne L$, $\emptyset \ne L\cap L_2 \ne L$, and $L_1 \ne L_2$, we define $L(L_1, L_2) = \overline{L\cap L_1, L \cap L_2}$ to be the line segment connecting the two intersection points, otherwise we set $L(L_1, L_2) = \emptyset$. For any locally finite, translation invariant measure $\Lambda$ on $[\RR^2]$ we have a unique measure $\mathcal{R}$ on $[\RR^2]_0$, called the directional measure, that allows the decomposition 
$$
\int_{[\RR^2]} g(L) \,  \Lambda(\dint L) = \int_{[\RR^2]_0} \int_{L_0^\perp} g(L_0 + z) \, \ell_1(\dint z) \,  \mathcal{R}(\dint L_0)
$$
for any non-negative measurable function $g: [\RR^2] \to \RR$, see \cite[Theorem 4.4.1]{SchneiderWeil}. Sufficient normalization is usually applied to $\mathcal{R}$ in order to gain a probability distribution, which is then referred to as the directional distribution.

\subsection{Set-up} \label{subsec:Setup}

For a weight parameter $p\in(0,1)$ we consider the  measure $\Lambda_p$ on the space $[\mathbb{R}^2]$ of lines of the following form:
\begin{eqnarray} \label{def:Lambda}
	\Lambda_p(\,\cdot\,) := p\int_\RR{\bf 1}(E_1+\tau e_2\in\,\cdot\,)\,\dint\tau + (1-p)\int_\RR{\bf 1}(E_2+\tau e_1\in\,\cdot\,)\,\dint\tau,
\end{eqnarray}
where $\{e_1,e_2\}$ is the standard orthonormal basis in $\RR^2$, $E_i={\rm span}(e_i)$ for $i\in\{1,2\}$ and we integrate with respect to the Lebesgue measure. By a Mondrian tessellation of some convex polygon $W \subset \RR^2$ with weight $p\in(0,1)$ and time parameter $t>0$ we understand a planar STIT tessellation $Y_t(W)$ with driving measure $\Lambda_p$. Given {a polygon $W \subset\RR^2$ and a Mondrian tessellation $Y_t(W)$}, $t >0$, together with a bounded, measurable functional $\phi$ %: \mathcal{S}(\RR^2) \to [0,\infty)$ 
of tessellation edges or line segments, respectively, we define the functionals $\Sigma_\phi$ and $A_\phi$ by
\begin{equation}\label{eq:FuncAllgSigma}
	\Sigma_\phi(Y_t(W)) := \sum_{e \, \in \, Y_t(W)} \phi(e),
\end{equation}

and 

\begin{equation}\label{eq:FuncAllgA}
	A_\phi(Y_t(W))=\int_{[W]} \sum_{e \, \in \,  \operatorname{Segments}(Y_t(W) \cap L)} \phi(e) \ \Lambda(\dint L),
\end{equation}
where $Y_t(W) \cap L$ for $L\in [W]$ is a one-dimensional tessellation of $W\cap L$, whose cells are simply the associated segments (intervals) of the tessellation. In the focus of the present paper are the following special cases:
\begin{itemize}
	\item[(i)] taking $\phi\equiv 1$, $\Sigma_\phi(Y_t(W))$ reduces to the number of maximal edges of $Y_t(W)$;
	\item[(ii)] taking $\phi(e)=\Lambda_p([e])$, $\Sigma_\phi(Y_t(W))$ is the weighted total edge length of $Y_t(W)$, where all maximal edges parallel to the $e_1$ direction are weighted by the factor $p$ and the maximal edges parallel to the $e_2$ direction with a factor $1-p$. In the following we write $\Sigma_{\Lambda_p}$  and  $A_{\Lambda_p}$ instead of $\Sigma_{\Lambda_p([\cdot])}$ and  $A_{\Lambda_p([\cdot])}$ for notational brevity.
\end{itemize}
We note that property (ii) follows from the observation that, for a measurable set $A\subset\RR^2$,
\begin{equation}\label{eq:Lambda}
	\Lambda_p([A]) = p\ell_{1}(\Pi_{E_{{2}}}A) + (1-p)\ell_{1}(\Pi_{E_{{1}}}A),
\end{equation}
where for a line $E\in[\mathbb{R}^2]$ we write $\Pi_E$ for the orthogonal projection onto $E$.

\subsection{Variances}

We assume the same set-up as described in the previous section and recall the definition of $\Sigma_\phi$ from \eqref{eq:FuncAllgSigma}. The expected values $\EE \Sigma_\phi$ for $\phi\equiv 1$ and $\phi(\cdot)=\Lambda_p([\,\cdot\,])$ are  known from \cite[Section 2.3]{ST-Plane} and given as follows. 

\begin{proposition}\label{Prop:Expectation}
	Consider a Mondrian tessellation $Y_t(W)$ with weight $p\in(0,1)$. Then
	\[
	\EE[\Sigma_{\Lambda_p}(Y_t(W))]= 2tp(1-p)\ell_2(W)
	\]
	and
	\[
	\EE[\Sigma_1(Y_t(W))]= t^2 p(1-p)\,\ell_2(W)+t (p\, \ell_{1}(\Pi_{E_{2}}W) + (1-p)\,\ell_{1}(\Pi_{E_{1}}W)).
	\]
	For rectangular windows $W=[-a,a] \times [-b,b]$, $a,b >0$, these expressions simplify to
	\[
	\EE[\Sigma_{\Lambda_p}(Y_t(W))]=8t p(1-p)ab \qquad \text{and} \quad \EE[\Sigma_1(Y_t(W))]=4t^2 p(1-p)ab+2t(p b+(1-p) a).
	\]
\end{proposition}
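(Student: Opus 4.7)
The plan is to exploit the Markovian structure of the STIT construction and compute the instantaneous rate at which $\EE[\Sigma_\phi(Y_s(W))]$ grows in $s$, then integrate over $s\in[0,t]$. At any instant $s$, every cell $C$ of the current partition $Y_s(W)$ carries an independent exponential clock of rate $\Lambda_p([C])$; when it rings, a splitting line $L$ is sampled from $\Lambda_p(\,\cdot\,\cap[C])/\Lambda_p([C])$ and the new maximal edge $L\cap C$ is added. Consequently, for any bounded measurable $\phi$, Dynkin's formula yields
\begin{equation*}
\frac{d}{ds}\,\EE\bigl[\Sigma_\phi(Y_s(W))\bigr]\;=\;\EE\biggl[\sum_{C\in Y_s(W)}\int_{[C]}\phi(L\cap C)\,\Lambda_p(\dint L)\biggr],
\end{equation*}
and the task reduces to evaluating this derivative in closed form.

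For the weighted length, take $\phi=\Lambda_p([\,\cdot\,])$. A horizontal cut $L$ of a cell $C$ produces $\Lambda_p([L\cap C])=(1-p)\ell_1(L\cap C)$, while a vertical cut produces $p\,\ell_1(L\cap C)$. Splitting the outer $\Lambda_p$-integral according to \eqref{def:Lambda} and using Fubini (so that $\int_{\RR}\ell_1(L_\tau\cap C)\,\dint\tau=\ell_2(C)$), both the horizontal and the vertical contributions equal $p(1-p)\ell_2(C)$ per cell, so the rate becomes
\begin{equation*}
\sum_{C\in Y_s(W)}2p(1-p)\ell_2(C)\;=\;2p(1-p)\ell_2(W),
\end{equation*}
since the cells partition $W$. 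This rate is deterministic and constant in $s$, so integration on $[0,t]$ yields the first identity.

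For $\phi\equiv 1$, the derivative reduces to $\EE[\sum_C\Lambda_p([C])]$, which by \eqref{eq:Lambda} decouples into the two running projection sums
\begin{equation*}
F(s)\,:=\,\EE\Bigl[\sum_{C\in Y_s(W)}\ell_1(\Pi_{E_2}C)\Bigr],\qquad G(s)\,:=\,\EE\Bigl[\sum_{C\in Y_s(W)}\ell_1(\Pi_{E_1}C)\Bigr],
\end{equation*}
via $\EE[\sum_C\Lambda_p([C])]=pF(s)+(1-p)G(s)$. Applying the same rate technology to $F$: a horizontal cut preserves $\sum_C\ell_1(\Pi_{E_2}C)$, since the two children just partition $\Pi_{E_2}C$, whereas a vertical cut of a (necessarily convex) cell $C$ at abscissa $\xi$ makes the shadows $\Pi_{E_2}C_1$ and $\Pi_{E_2}C_2$ overlap on precisely $\{y:(\xi,y)\in C\}$, by convexity. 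Inclusion–exclusion then gives an increment $\ell_1(L\cap C)$; integrating this against the vertical component $(1-p)\,\dint\xi$ of $\Lambda_p$ over $\Pi_{E_1}C$ and summing over cells produces the deterministic rate $(1-p)\ell_2(W)$, hence $F(s)=\ell_1(\Pi_{E_2}W)+s(1-p)\ell_2(W)$, and symmetrically $G(s)=\ell_1(\Pi_{E_1}W)+sp\,\ell_2(W)$. Substituting back makes the overall rate affine in $s$; one further integration over $[0,t]$ produces the claimed expression $t^2p(1-p)\ell_2(W)+t\bigl(p\,\ell_1(\Pi_{E_2}W)+(1-p)\ell_1(\Pi_{E_1}W)\bigr)$.

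The rectangular simplifications follow immediately from $\ell_2(W)=4ab$, $\ell_1(\Pi_{E_2}W)=2b$, and $\ell_1(\Pi_{E_1}W)=2a$. The main mild subtlety is the projection identity for vertical cuts of non-rectangular convex cells, which requires the convexity of each cell to identify $\Pi_{E_2}C_1\cap\Pi_{E_2}C_2$ as the single interval $\{y:(\xi,y)\in C\}$ of length $\ell_1(L\cap C)$; apart from this, the argument is a bookkeeping exercise built on the fact that the cells always partition $W$.
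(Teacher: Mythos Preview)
Your argument is correct. It is, however, a genuinely different route from the paper's. The paper imports ready-made expectation formulas from \cite{ST-Plane}, namely
\[
\EE[\Sigma_{\Lambda_p}(Y_t(W))]=t\,\ll\Lambda_p\cap\Lambda_p\gg(W),\qquad
\EE[\Sigma_1(Y_t(W))]=t\,\Lambda_p([W])+\tfrac{t^2}{2}\ll\Lambda_p\cap\Lambda_p\gg(W),
\]
and then only needs to evaluate the point intersection measure $\ll\Lambda_p\cap\Lambda_p\gg(A)=2p(1-p)\ell_2(A)$ for the Mondrian driving measure. You instead re-derive the expectations from scratch via the generator of the STIT dynamics: for $\Sigma_{\Lambda_p}$ the rate collapses immediately to the constant $2p(1-p)\ell_2(W)$, while for $\Sigma_1$ you track the auxiliary projection sums $F(s),G(s)$, observe via the convexity/inclusion--exclusion argument that they satisfy linear ODEs with constant rates, and integrate twice. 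Your approach is more self-contained (no black-box citation) and makes the underlying geometry transparent; the paper's is shorter once the cited machinery is granted, and it has the advantage of isolating $\ll\Lambda_p\cap\Lambda_p\gg$ as a reusable object that reappears in the subsequent variance computations.
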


We turn now to the main result of this section, which provides a fully explicit second-order description of the functionals $\Sigma_1$ and $\Sigma_{\Lambda_p}$. We remark that for general STIT tessellation in the plane, abstract formulas have already been developed in \cite[Theorem 1]{ST-Plane} and they have been used in the isotropic case. However, specializing them explicit{ly} to Mondrian tessellations is a non-trivial task. 

\begin{theorem}\label{Thm:Variances}
	Take $ W=[-a,a]\times [-b,b] $, $a,b >0$, and let  $ Y_t(W) $  be a Mondrian tessellation with weight $p\in(0,1)$.
	\begin{itemize}
		\item[(i)] The variance of the weighted total edge length $\Sigma_{\Lambda_p}$ is given by
		\begin{align*}
			\Var(\Sigma_{\Lambda_p}(Y_t(W)))=-8abp(1-p)\Big((1-p) g_1(2at(1-p)) + p g_1(2btp) \Big)
		\end{align*}
		with $g_1(x)=\sum_{k=1}^\infty (-1)^k \frac{x^k}{k(k+1)!}$ non-positive and monotonically decreasing on $[0, \infty).$
		\item[(ii)]The variance of the number of maximal edges $ \Sigma_{1} $ of $ Y_t(W) $ in $  W $ is given by
		\begin{align*}
			&\Var(\Sigma_{1}(Y_t(W)))\\
			&=2tbp+2ta(1-p)+12abt^2p(1-p)+ 16ab  t^2p(1-p)\,\Big((1-p) g_2(2a(1-p)t)+p g_2(2btp)\Big)
		\end{align*}
		with $ g_2(x)= \sum_{k=1}^{\infty}(-1)^k \frac{x^k }{k(k+1)(k+2)!} . $
		\item[(iii)]The covariance of the two functionals in (i) and (ii) is given by
		\begin{align*}
			\Cov(\Sigma_{\Lambda_p}(Y_t(W)),\Sigma_{1}(Y_t(W)))& = 8tab p(1-p)\Big(1 -\big[(1-p)g_3(2at(1-p))+pg_3(2btp)\big]\Big)
		\end{align*}
		with $ g_3(x)= \sum_{k=1}^{\infty}(-1)^k \frac{x^k }{k(k+1)(k+1)!}. $
	\end{itemize}
\end{theorem}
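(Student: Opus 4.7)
My starting point is the abstract second-order formula for planar STIT tessellations recorded as \cite[Theorem~1]{ST-Plane}, which expresses $\Var(\Sigma_\phi)$ and $\Cov(\Sigma_\phi,\Sigma_\psi)$ as integrals against $\Lambda\otimes\Lambda$ (plus a diagonal contribution for counting functionals), with a kernel that couples the geometry of pairs of lines in $[W]$ with the STIT dynamics through survival factors of the form $e^{-s\Lambda([\cdot])}$, $s\in[0,t]$. The three statements (i)-(iii) are then obtained by specialising this formula to $\Lambda=\Lambda_p$, to the rectangle $W=[-a,a]\times[-b,b]$, and to the two functionals $\phi_1\equiv 1$ and $\phi_2=\Lambda_p([\cdot])$.

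\textbf{Reduction and computation.} Because $\Lambda_p$ concentrates on axis-parallel lines, every double integral splits into four pieces: horizontal-horizontal (weight $p^2$), vertical-vertical (weight $(1-p)^2$), and the two mixed types (total weight $2p(1-p)$). The geometry is invariant under $(a,p)\leftrightarrow(b,1-p)$, so one only needs to treat one parallel piece and one mixed piece in detail. Two horizontal lines at heights $y_1,y_2\in[-b,b]$ never meet inside $W$, and by \eqref{eq:Lambda} the sub-rectangle of $W$ between them has $\Lambda_p$-mass $p|y_1-y_2|+2a(1-p)$; plugging this into the kernel, integrating in $s\in[0,t]$, then integrating over $(y_1,y_2)\in[-b,b]^2$ against the $|y_1-y_2|$-factor produced by the kernel, and finally expanding the resulting exponentials as Taylor series, reproduces exactly the three power series defining $g_1$, $g_2$ and $g_3$ (the three distinct denominators $k(k+1)!$, $k(k+1)(k+2)!$ and $k(k+1)(k+1)!$ arise from the three combinations of $(\phi,\psi)\in\{\phi_1,\phi_2\}^2$, each introducing its own number of additional polynomial integrations in $s$ and in the perpendicular line coordinate). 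A horizontal and a vertical line, on the other hand, meet at a single interior point of $W$, so the associated kernel is constant in that point and the mixed contribution factorises into a product of elementary integrals; it yields the polynomial-in-$t$ pieces $8abp(1-p)$ in~(i), $12abt^2p(1-p)$ in~(ii) and $8tabp(1-p)$ in~(iii). The additional linear correction $2tbp+2ta(1-p)$ in~(ii) comes from the diagonal term in the second factorial moment measure of the edge process, which is only present for the counting functional $\phi_1$.

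\textbf{The function $g_1$ and main obstacle.} The sign and monotonicity claim for $g_1$ in~(i) follows by termwise differentiation: $g_1'(x)=-(e^{-x}-1+x)/x^2<0$ for $x>0$ and $g_1(0)=0$, so $g_1$ is strictly decreasing on $[0,\infty)$ and non-positive. The principal technical difficulty is the parallel-lines reduction above: unwinding the \cite{ST-Plane} kernel for two parallel axis-aligned lines requires careful bookkeeping of the ``order of birth'' of the two lines and of the probability that the later one still lies in the cell inherited from the earlier one, together with repeated applications of \eqref{eq:Lambda} to rewrite all line masses of rectangular sub-windows in the unified form $p\cdot(\text{vertical extent})+(1-p)\cdot(\text{horizontal extent})$. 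Once this reduction is carried out, matching the resulting series coefficients with the definitions of $g_1,g_2,g_3$ and assembling all direction pairs into the stated closed forms is a mechanical, if laborious, calculation which will be carried out in Section~\ref{sec:Variances}.
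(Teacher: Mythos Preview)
Your high-level strategy --- specialise \cite[Theorem~1]{ST-Plane} to $\Lambda_p$ and to the rectangle $W$ --- is the right one, and your monotonicity argument for $g_1$ is correct. But your description of what that formula actually says is off, and this leads to concrete claims that contradict the theorem statement. The variance formula for $\Sigma_{\Lambda_p}$ in \cite{ST-Plane} is not a double integral over pairs of lines from $\Lambda_p\otimes\Lambda_p$; it is a \emph{triple} line integral coming from the segment intersection measure: one integrates over a carrier line $L\in[W]$ and two lines $L_1,L_2\in[L\cap W]$, with kernel $\exp\big(-u\,\Lambda_p([L(L_1,L_2)])\big)$ where $L(L_1,L_2)$ is the \emph{segment} on $L$ between $L\cap L_1$ and $L\cap L_2$. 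In the Mondrian case this forces $L_1,L_2\perp L$, and $\Lambda_p([L(L_1,L_2)])$ equals $(1-p)|\tau-\vartheta|$ or $p|\tau-\vartheta|$ --- a single linear term, not the rectangle mass $p|y_1-y_2|+2a(1-p)$ you write. There is no parallel-versus-crossing dichotomy here, and in particular no ``mixed polynomial piece $8abp(1-p)$'' in part~(i): as the statement shows, the variance in (i) consists \emph{only} of the two $g_1$ terms. Your proposed kernel would produce exponentials mixing $a$ and $b$, whereas the correct answer separates cleanly into $g_1(2at(1-p))$ and $g_1(2btp)$.

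A second structural point: parts (ii) and (iii) are not obtained by running three parallel instances of the same $\Lambda\otimes\Lambda$ computation with different $(\phi,\psi)$. Rather, \cite[Theorem~1 and Proposition~1]{ST-Plane} express $\Var(\Sigma_1)$ and $\Cov(\Sigma_{\Lambda_p},\Sigma_1)$ as explicit first-order terms $t\Lambda_p([W])$ and $t\ll\Lambda_p\cap\Lambda_p\gg(W)$ (these supply the $2tbp+2ta(1-p)$, $12abt^2p(1-p)$ and $8tabp(1-p)$ pieces) plus time integrals $\int_0^t\!\int_0^s\Var(\Sigma_{\Lambda_p}(Y_u))\,du\,ds$ and $\int_0^t\Var(\Sigma_{\Lambda_p}(Y_s))\,ds$ of the quantity already computed in (i). Those extra $s$-integrations are exactly what turn the denominator $k(k+1)!$ of $g_1$ into $k(k+1)(k+2)!$ for $g_2$ and $k(k+1)(k+1)!$ for $g_3$. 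So the correct workflow is: compute (i) once via the triple integral and a Taylor expansion of $\exp(-u(1-p)|\tau-\vartheta|)$, then integrate in time to get (ii) and (iii).
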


Next, we specialize Theorem \ref{Thm:Variances} to the case where $ W=Q_r:=[-r,r]^2 $ is a square with side length $2r>0$ and consider the behaviour of the (co-)variances, as $r\to\infty$. In what follows we write $f(r)\asymp g(r)$ for two functions $f,g:[0,\infty)\to\mathbb{R}$, whenever $f(r)/g(r)\to 1$, as $r\to\infty$.

\begin{corollary}\label{cor:VarianceAsymptotic}
	Let $ Q_r=[-r,r]^2 $ be a square and consider a Mondrian tessellation $ Y_t(Q_r) $  with weight $p\in(0,1)$. Then, as $r\to\infty$,
	\begin{align*}
		\Var(\Sigma_{\Lambda_p }(Y_t(Q_r)))\asymp 4p(1-p)r^2\log(r),\qquad\qquad
		\Var(\Sigma_{1}(Y_t(Q_r))) \asymp 4t^2p(1-p) r^2\log(r)
	\end{align*}
	and
	\begin{align*}
		\Cov(\Sigma_{\Lambda_p}(Y_t(Q_r)), \Sigma_{1}(Y_t(Q_r)))\asymp 4tp(1-p)r^2\log(r).
	\end{align*}
\end{corollary}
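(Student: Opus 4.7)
The plan is to specialize the formulas of Theorem \ref{Thm:Variances} to $W=Q_r$, i.e., $a=b=r$, and then to extract the leading-order behaviour of each expression as $r\to\infty$. The dominant contribution will come from the series $g_1,g_2,g_3$, each producing a $\log r$ factor, while the purely polynomial terms in Theorem \ref{Thm:Variances} (such as $2tr$ or $12 r^2 t^2 p(1-p)$) are $O(r^2)$ and hence subleading against $r^2\log r$.

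The technical core is the asymptotic analysis of $g_i(x)$ as $x\to\infty$, for $i\in\{1,2,3\}$. For $g_1$, interchanging summation and integration and invoking the closed-form identity $\sum_{k\ge 1}(-y)^k/(k+1)! = -(e^{-y}-1+y)/y$ (which follows from the Taylor expansion of $e^{-y}$ with the first two terms removed) gives the integral representation $g_1(x) = -\int_0^x (e^{-y}-1+y)/y^2\,\dint y$. A single integration by parts, with $u=e^{-y}-1+y$ and $\dint v=y^{-2}\,\dint y$, rewrites this as the classical exponential integral $\textup{Ein}(x) := \int_0^x (1-e^{-y})/y\,\dint y$ plus an elementary boundary term that tends to a constant as $x\to\infty$. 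Invoking the well-known asymptotic $\textup{Ein}(x) = \log x + \gamma + o(1)$ as $x\to\infty$ then yields $g_1(x) = c_1\log x + O(1)$ for an explicit constant $c_1$.

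For $g_2$ and $g_3$, I would apply the partial-fraction decomposition $\frac{1}{k(k+1)} = \frac{1}{k} - \frac{1}{k+1}$ to split each series into two simpler pieces. Each piece admits an integral representation analogous to the one for $g_1$, obtained by summing $\sum_{k\ge 1}(-y)^k/(k+m)!$ in closed form as an appropriate Taylor remainder of $e^{-y}$ divided by a power of $y$. One or two integrations by parts then reduce these integrals to combinations of elementary rational functions of $x$, a rescaled copy of $g_1$, and (in the case of $g_3$) the exponential integral $\textup{Ein}$. Passing to the limit $x\to\infty$ gives $g_i(x) = c_i\log x + O(1)$ for $i=2,3$ with explicit constants $c_2,c_3$ as well.

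Substituting these expansions into the three formulas of Theorem \ref{Thm:Variances} with $a=b=r$, I write $\log(2rt\alpha) = \log r + \log(2t\alpha)$ for $\alpha\in\{p,1-p\}$, so that each combination $(1-p)\,g_i(2rt(1-p)) + p\,g_i(2rtp)$ evaluates to $c_i\log r + O(1)$, the $p$-dependent constants $\log(2t\alpha)$ being absorbed into the $O(1)$ remainder. Multiplying by the $O(r^2)$ prefactor in the corresponding part of Theorem \ref{Thm:Variances} and combining with the remaining $O(r^2)$ summands then delivers the three claimed asymptotic equivalences. The main obstacle is the bookkeeping for $g_2$ and $g_3$: two integration-by-parts steps each produce several cancellations that must be tracked carefully to identify the precise numerical constant appearing in front of $\log r$ in the final answer.
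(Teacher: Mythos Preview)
Your proposal is correct and follows the same route as the paper: both arguments specialize Theorem \ref{Thm:Variances} to $a=b=r$ and reduce the corollary to the asymptotics $g_1(x)\asymp -\log x$, $g_2(x)\asymp -\tfrac12\log x$, $g_3(x)\asymp -\log x$, which the paper simply asserts (``one can check that\ldots'') while you spell out a concrete derivation via the integral representation through $\mathrm{Ein}(x)$ and integration by parts. The only difference is the level of detail, not the strategy.
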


\begin{remark}\label{rem:CompVariances}\rm 
	It is instructive to compare this result to the corresponding asymptotic formulas for isotropic STIT tessellations in the plane and the rectangular Poisson line process.  We therefore denote by $\Lambda_{\rm iso }$ the isometry invariant measure on the space of lines in the plane normalized in such a way that $\Lambda_{\rm iso}([[0,1]^2])={4\over\pi}$ (this is the same normalization as the one used in \cite{SchneiderWeil}).
	\begin{itemize}
		\item[(i)] For isotropic STIT tessellations it is known from \cite[Section 3.4]{ST-Plane} in combination with \cite[Theorem 3]{ST2012} that, for example,
		\begin{align*}
			\Var(\Sigma_{\Lambda_{\rm iso }}(Y_t(B_r^2))) \asymp {16\over\pi}r^2\log(r)\qquad\text{and}\qquad \Var(\Sigma_{1}(Y_t(B_r^2))) \asymp {16\over\pi}t^2r^2\log(r)
		\end{align*}
		for a disc $B_r^2$ of radius $r>0$, as $r\to\infty$.
		In addition, it can be concluded from \cite[Theorem 1]{ST-Plane} that
		\begin{align*}
			\Cov(\Sigma_{\Lambda_{\rm iso}}(Y_t(B_r^2)), \Sigma_{1}(Y_t(B_r^2)))\asymp {16\over\pi}tr^2\log(r).
		\end{align*}
		
		\item[(ii)] For the rectangular Poisson line process we consider a stationary Poisson line process $\eta_p$ in the plane with intensity measure $\lambda$ with intensity $t>0$ and directional distribution $pe_2+(1-p)e_1$. For the total edge length  $\Sigma_{t\lambda}(Q_r)$ and the number of edges $\Sigma_1(Q_r)$  %(which is roughly two times $  \Sigma_{1}$) 
		in a square $Q_r=[-r,r]^2$  with side length $ 2r > 0 $ one has
		$$
		\Var (\Sigma_{t \lambda }(Q_r)) = 8tr^3
		$$
		and
		$$
		\Var (\Sigma_1(Q_r)) = t^3(2p(1-p)^2r^3+2p^2(1-p)r^3)+2t^2p(1-p)r^2 \asymp 2t^3r^3p(1-p).$$ 
	\end{itemize}
\end{remark}

\subsection{Correlation Function}\label{sec:CorrFct}
In \cite{ST-Plane} an explicit description of the pair-correlation function of the vertex point process of an isotropic planar STIT tessellation has been derived, while such a description for the random edge length measure can be found in \cite{ST2012}. Also the so-called cross-correlation function between the vertex process and the random length measure was computed in \cite{ST-Plane}. In the present paper we develop similar results for planar Mondrian tessellations. To define the necessary concepts, we suitably adapt the notions used in the isotropic case. We let $Y_t$ be a weighted Mondrian tessellation of $\RR^2$ with weight $p\in(0,1)$ and time parameter $t>0$, define $R_p:=[0,1-p]\times[0,p]$ and let $R_{r,p}:=rR_p$ be the rescaled rectangle with side lengths $r(1-p)$ and $rp$. In the spirit of Ripley's K-function widely used in spatial statistics \cite{SKM95}, we let $t^2K_{\cal E}(r)$ be the total edge length of $Y_t$ in $R_{r,p}$ when $Y_t$ is regarded under the Palm distribution with respect to the random edge length measure $\mathcal{E}_t$ concentrated on the edge skeleton. On an intuitive level the latter means that we condition on the origin being a typical point of the edge skeleton, see \cite[Section 4.5]{SKM95}. The classic version of Ripley's K-function considers a ball of radius $r>0$, but since our driving measure is non-isotropic, we account for that by considering $R_{r,p}$ instead. Similarly, we let $\lambda K_{\cal V}(r)$ be the total number of vertices of $Y_t$ in $R_{r,p}$, where $\lambda=t^2p(1-p)$ stands for the vertex intensity of $Y_t$ and where we again condition on the origin being a typical  vertex of the tessellation (in the sense of the Palm distribution with respect to the random vertex point process). While these functions still have a complicated form (which we will determine in the course of our arguments below), we consider their normalized derivatives -- provided these derivatives are well defined as it is the case for us. In the isotropic case, these are known as the pair-correlation functions of the random edge length measure or the vertex point process, respectively. In our case, the following normalization turns out to be most suitable:
$$
g_\mathcal{E}(r)= \frac{1}{2p(1-p)r} \frac{\dint K_\mathcal{E}(r)}{\dint r} \qquad\text{and}\qquad g_\mathcal{V}(r)= \frac{1}{2p(1-p)r} \frac{\dint K_\mathcal{V}(r)}{\dint r},
$$
where we suppress the dependence of $g_\mathcal{E}$ and $g_\mathcal{V}$ on $t>0$ for notational brevity. In contrast to the classical pair-correlation function we normalize by the factor $2p(1-p)r$ instead of $2\pi r$, as we use the adaptation of Ripley's K-function based on $R_{r,p}$ instead of a ball of radius $r$. The normalized derivatives above are what we refer to as the \textit{Mondrian pair-correlation function} of $\mathcal{E}_t$ and $\mathcal{V}_t$, respectively.
Similarly to the isotropic case, one can also define what is called a cross K-function $K_{\mathcal{E},\mathcal{V}}(r)$ and the corresponding correlation function
$$
g_{\mathcal{E},\mathcal{V}}(r)= \frac{1}{2p(1-p)r} \frac{\dint K_{\mathcal{E},\mathcal{V}}(r)}{\dint r},
$$
again suppressing the dependence of $g_{\mathcal{E},\mathcal{V}}$ on $t>0$ and refer to Equation \eqref{eq:Cross-KFunction} below for a formal description. Since it is also based on $R_{r,p}$ instead of a ball of radius $r>0$, we call  it the \textit{Mondrian cross-correlation function} of $\mathcal{E}_t$ and $\mathcal{V}_t$.
We are now prepared for the presentation of the main results of this section. We start with the Mondrian edge pair-correlation function $g_\mathcal{E}(r)$.
\begin{theorem} \label{thm:EdgePair}
	Let  $ Y_t$  be a Mondrian tessellation with weight $p\in(0,1)$ and time parameter $t>0$. Then 
	$$
	g_\mathcal{E}(r) = 1 +\frac{1}{2 t^2 r^2}\Big(\frac{1}{p^2}  +\frac{1}{ (1-p)^2 } -\frac{e^{-trp^2}}{ p^2 } -\frac{e^{-tr(1-p)^2}}{(1-p)^2 }\Big).
	$$
\end{theorem}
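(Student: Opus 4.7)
The plan is to compute $K_{\mathcal E}(r)$ by identifying the second-order product density $\rho^{(2)}(z)$ of $\mathcal{E}_t$ via the iteration-stability of the Mondrian construction, and then differentiating in $r$. Using the refined Campbell theorem for stationary random measures, the defining identity $t^2K_{\mathcal E}(r)=\EE^0[\mathcal{E}_t(R_{r,p})]$ becomes an integral of $\rho^{(2)}$ over $R_{r,p}$. Differentiating and exploiting the product structure $R_{r,p}=[0,r(1-p)]\times[0,rp]$ yields
$$
t\,\frac{\dint K_{\mathcal E}(r)}{\dint r} \,=\, (1-p)\!\int_0^{rp}\! g^{(z)}\bigl(r(1-p),z_2\bigr)\,\dint z_2 \;+\; p\!\int_0^{r(1-p)}\! g^{(z)}\bigl(z_1,rp\bigr)\,\dint z_1 ,
$$
where $g^{(z)}=\rho^{(2)}/\lambda_{\mathcal E}^{2}$ denotes the usual (offset-dependent) pair-correlation function of $\mathcal{E}_t$. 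Thus it suffices to know $g^{(z)}$ only along the right and top edges of $R_{r,p}$, that is, for $z=(z_1,z_2)$ with $z_1,z_2>0$.

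For such offsets, I would split $\mathcal{E}_t=\mathcal{E}_t^{h}+\mathcal{E}_t^{v}$ into horizontal and vertical edges, whose single-point intensities $\lambda_h=tp$ and $\lambda_v=t(1-p)$ follow from Proposition~\ref{Prop:Expectation} by applying it to the separate weights on horizontal and vertical edges. This decomposes $\rho^{(2)}$ into four bilinear components $\rho^{(2)}_{hh},\rho^{(2)}_{hv},\rho^{(2)}_{vh},\rho^{(2)}_{vv}$. Each is then evaluated via the STIT first-separation identity: the first cutting line of $Y_t$ separating $0$ and $z$ appears at an exponential time $\tau$ with rate $\Lambda_p([\overline{0,z}])=p z_2+(1-p)z_1$, and on $\{\tau<t\}$ the two cells then containing $0$ and $z$ evolve as independent Mondrian tessellations with weight $p$ and remaining time budget $t-\tau$. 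Conditioning on $\tau$ (and on the orientation of the first separating cut), each $\rho^{(2)}_{\bullet\bullet}(z)$ reduces to a closed-form expression whose only $z$-dependence beyond constants is through the survival factor $e^{-t(pz_2+(1-p)z_1)}$.

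Substituting these densities back into the one-dimensional integrals from the first step produces the announced formula. Along the right edge, the factor $e^{-tpz_2-tr(1-p)^{2}}$ integrates in $z_2\in[0,rp]$ to $e^{-tr(1-p)^{2}}(1-e^{-trp^{2}})/(tp)$, and a symmetric computation along the top edge yields $e^{-trp^{2}}(1-e^{-tr(1-p)^{2}})/(t(1-p))$. Collecting these with the constant parts of $\rho^{(2)}_{\bullet\bullet}$ (which provide the baseline contribution to $g_{\mathcal E}(r)$) and dividing by $2p(1-p)r$ gives, after elementary algebra, the closed form of Theorem~\ref{thm:EdgePair}. The two exponentials $e^{-trp^{2}}$ and $e^{-tr(1-p)^{2}}$ arise precisely as the $\Lambda_p$-mass (times $t$) of horizontal cutting lines with heights in $[0,rp]$ and of vertical cutting lines with $x$-coordinates in $[0,r(1-p)]$.

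The technical heart of the proof is Step~2, the explicit determination of the four bilinear two-point densities $\rho^{(2)}_{\bullet\bullet}(z)$. While the first-separation identity is standard for STIT tessellations, specialising it to the axis-parallel Mondrian case requires splitting the total separation rate $pz_2+(1-p)z_1$ into its horizontal and vertical parts according to the orientation of the first separating cut, carefully tracking the creation of subsequent edges through $0$ and $z$ in the two independently-evolving sub-cells, and consistently treating the common-cell event $\{\tau\ge t\}$ where both edges must be produced inside a common ancestor cell. Organising the four contributions so that the natural $p$ versus $1-p$ asymmetry resolves into the symmetric expression appearing in the theorem is where the bulk of the STIT-specific combinatorics enters.
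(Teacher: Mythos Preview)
Your plan contains a structural gap. You assume that the reduced second-moment measure of $\mathcal{E}_t$ is absolutely continuous with respect to planar Lebesgue measure, so that there is a density $\rho^{(2)}(z)$ which you can sample along the right and top boundaries of $R_{r,p}$ when differentiating $K_{\mathcal E}$. For the Mondrian edge process this is false: the covariance part $\widehat{\Cov}(\mathcal{E}_t)$ is \emph{singular}, concentrated entirely on the two coordinate axes $\{z_2=0\}$ and $\{z_1=0\}$. This is visible directly from the segment-based covariance identity used in the paper, which expresses $\Cov(\mathcal{E}_t)$ as an integral of $\ell_1(\cdot\cap e)\otimes\ell_1(\cdot\cap e)$ over single line segments $e$; every Mondrian segment is axis-parallel, so any two points on the same segment differ by a purely horizontal or purely vertical vector. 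Consequently, for displacements $z$ with $z_1,z_2>0$ one has $g^{(z)}(z)=1$ exactly, and your boundary-integral differentiation along the open right and top edges would return $g_{\mathcal E}(r)\equiv 1$, losing the whole non-trivial term of the theorem.

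The non-trivial contribution to $K_{\mathcal E}(r)$ comes from the mass that $\widehat{\Cov}(\mathcal{E}_t)$ places on the one-dimensional pieces $[0,r(1-p)]\times\{0\}$ and $\{0\}\times[0,rp]$, and differentiating these one-dimensional integrals in $r$ is what produces the two \emph{separate} exponentials $e^{-tr(1-p)^2}$ and $e^{-trp^2}$ in the statement. That they enter additively, rather than through the product survival factor $e^{-t(pz_2+(1-p)z_1)}$ your first-separation heuristic predicts, is already a tell-tale sign that a genuine two-dimensional density is the wrong object. To repair your route you would have to recognise that $\rho^{(2)}_{hh}$ carries a Dirac layer on $\{z_2=0\}$ and $\rho^{(2)}_{vv}$ one on $\{z_1=0\}$, compute their one-dimensional densities along the axes, and differentiate those contributions in $r$; this is essentially what the paper accomplishes via the segment intersection measure and Lemma~\ref{lem:DoubleIntegral0zDirac}.
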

This result can be considered as the Mondrian counterpart to \cite[Theorem 7.1]{ST2012}, while the next theorem for the Mondrian cross-correlation function $g_{\mathcal{E},\mathcal{V}}(r)$ is the analogue of \cite[Corollary 4]{ST-Plane}.
\begin{theorem} \label{thm:CrossCorr}
	Let  $ Y_t $  be a Mondrian tessellation with weight $p\in(0,1)$ and time parameter $t>0$. Then 
	\begin{eqnarray*}
		g_{\mathcal{E},\mathcal{V}}(r)% 
		&=& 1+ \frac{1}{t^2r^2p(1-p)} \Bigg[ \frac{1}{p} + \frac{1}{1-p} - \frac{1}{2tr(1-p)^3} - \frac{1}{2trp^3} \vphantom{\int_0}\\
		%\\
		&&  - e^{-tr(1-p)^2} \Bigg(\frac{1}{2(1-p)} - \frac{1}{2tr(1-p)^3} \Bigg) - e^{-trp^2} \Bigg(\frac{1}{2p} - \frac{1}{2trp^3} \Bigg) \Bigg].
	\end{eqnarray*}
\end{theorem}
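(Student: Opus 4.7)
My plan is to follow the strategy of Theorem \ref{thm:EdgePair} and its vertex counterpart, reducing the proof to an explicit evaluation of the joint second-order product density $\rho_{\mathcal{E},\mathcal{V}}(x)$ that records the simultaneous occurrence of a vertex at the origin and an edge at $x\in R_{r,p}$. By stationarity of $Y_t$ and the Campbell--Mecke formula,
\begin{equation*}
\EE\Bigl[\sum_{v\in\mathcal{V}_t}\int f(v,x)\,\mathcal{E}_t(\dint x)\Bigr]=\int_{\RR^2}\int_{\RR^2}f(v,x)\,\rho_{\mathcal{E},\mathcal{V}}(x-v)\,\dint v\,\dint x,
\end{equation*}
so that, up to the appropriate product of the edge length intensity and the vertex intensity $\lambda=t^2p(1-p)$ from Proposition \ref{Prop:Expectation}, the cross $K$-function is proportional to $\int_{R_{r,p}}\rho_{\mathcal{E},\mathcal{V}}(x)\,\dint x$. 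Differentiation in $r$ then reduces everything to one-dimensional integrals over the two sides $\{r(1-p)\}\times[0,rp]$ and $[0,r(1-p)]\times\{rp\}$ of $R_{r,p}$, which pick up the prefactors $p$ and $1-p$ reflecting the anisotropy of $\Lambda_p$; dividing the result by $2p(1-p)r$ delivers $g_{\mathcal{E},\mathcal{V}}(r)$.

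The main work is thus the evaluation of $\rho_{\mathcal{E},\mathcal{V}}(x)$ for $x=(x_1,x_2)$. Since any Mondrian vertex at the origin is the transversal intersection of one horizontal and one vertical maximal edge, and the edge through $x$ is itself either horizontal or vertical, a small case decomposition on directions applies. Each case can then be computed by means of the classical STIT survival identity, according to which a line segment $s$ appears in $Y_t$ with probability $e^{-t\Lambda_p([s])}$: for a horizontal segment of length $\ell$ this equals $e^{-t(1-p)\ell}$, and for a vertical segment it equals $e^{-tp\ell}$. Evaluating these along the boundary sides of $R_{r,p}$, of lengths $r(1-p)$ and $rp$, produces precisely the survival factors $e^{-tr(1-p)^2}$ and $e^{-trp^2}$ appearing in the statement. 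The iteration stability of the STIT process (cf.\ \cite{NW2005,mecke2008global}) decouples the tessellation on either side of each splitting line, which in turn allows the two splits at the origin and the one through $x$ to be treated as conditionally independent once their birth times have been integrated out against the exponential lifetime of the relevant cells.

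The principal technical obstacle is the careful bookkeeping of the Palm distribution at a typical vertex: one must jointly encode the horizontal and vertical maximal edges meeting at the origin, record when each of them as well as the edge through $x$ is born during the construction, and verify that no intermediate cut severs them before the levels $x_1$ and $x_2$ are reached. The auxiliary identities developed in Section \ref{sec:Aux} --- already invoked in the proofs of Theorem \ref{thm:EdgePair} and the vertex pair-correlation analogue --- should provide the requisite survival and intersection probabilities along axis-parallel rays, and can thus be reused directly. Once $\rho_{\mathcal{E},\mathcal{V}}(x)$ is expressed as a finite linear combination of exponentials in $x_1,x_2$ with coefficients polynomial in $p$ and $t$, the remaining two-dimensional integration over $R_{r,p}$ and the single $r$-derivative are entirely routine, yielding the rational and exponential structure of the claimed closed form.
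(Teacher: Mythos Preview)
Your proposal is a sketch rather than a proof, and it contains a genuine structural gap. You assume the existence of a product density $\rho_{\mathcal{E},\mathcal{V}}(x)$ with respect to two-dimensional Lebesgue measure on $\RR^2$, and you then plan to differentiate $\int_{R_{r,p}}\rho_{\mathcal{E},\mathcal{V}}(x)\,\dint x$ to obtain one-dimensional integrals over the sides $\{r(1-p)\}\times[0,rp]$ and $[0,r(1-p)]\times\{rp\}$. But this density does not exist: for a Mondrian tessellation, the reduced cross-covariance measure $\widehat{\Cov}_{\cV,\cE}$ is \emph{singular} with respect to $\ell_2$ and is concentrated on the two coordinate axes. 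This is geometrically clear --- conditioning on a vertex at the origin forces the correlated excess edge length to sit on the horizontal and vertical maximal edges through that vertex --- and it is exactly what the paper's Equation~\eqref{eq:ReducedCrossCovariance} records. Consequently your differentiation step does not produce boundary integrals over the sides of $R_{r,p}$; it produces point evaluations at the intersection of the axes with $\partial R_{r,p}$, i.e.\ at $(r(1-p),0)$ and $(0,rp)$. Your claim that $\rho_{\mathcal{E},\mathcal{V}}(x)$ is ``a finite linear combination of exponentials in $x_1,x_2$'' is therefore not just unproven but false in the form stated.

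A second issue is the role you assign to the ``survival identity'' $\PP(s\cap Y_t=\emptyset)=e^{-t\Lambda_p([s])}$. This is the STIT capacity functional for a segment $s$ and it is correct, but it does not by itself deliver the joint second-moment structure of $(\mathcal{V}_t,\mathcal{E}_t)$: one also needs the birth-time integration over the maximal-edge process, which in the paper enters through the iterated integrals $\mathcal{I}^1$ and $\mathcal{I}^2$ of \eqref{eq:I(n)1}--\eqref{eq:I(n)2}. Your description of ``recording when each edge is born and verifying no intermediate cut severs them'' is precisely the content of the martingale computation behind \cite[Theorem~3]{ST-Plane}, but you have not supplied it.

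The paper's route avoids both difficulties by starting from the general cross-covariance formula \cite[Theorem~3]{ST-Plane}, specializing it to $\Lambda_p$ via the segment intersection measure \eqref{eq:DefSegIntersecMeasure} and the identities \eqref{eq:LambdaSegmentIntersection}--\eqref{eq:Delta}, and then reducing with Lemma~\ref{lem:IntegralExpressions}. This yields $\widehat{\Cov}_{\cV,\cE}$ directly as a sum of one-dimensional measures on the axes (the terms $T_1,T_2,T_3$ in \eqref{eq:K12}), whose $r$-derivatives are computed with \eqref{eq:DerivativeIntegralI}--\eqref{DerivativeT2} and evaluated via Lemma~\ref{lem:ValueIntegralCalc}. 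If you want to salvage your Palm-based approach, you must first acknowledge that the reduced measure lives on the axes, parametrize it by a one-dimensional density along each axis, and then carry out the birth-time integration explicitly; at that point you will have essentially reproduced the paper's computation.
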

Finally, we deal with the Mondrian vertex pair-correlation function $g_\mathcal{V}(r)$, which in the isotropic case has been determined in  \cite[Corollary 3]{ST-Plane}. Plots for the functions $g_\mathcal{E}(r), 	g_{\mathcal{E},\mathcal{V}}(r),$ and $g_\mathcal{V}(r)$ for $t=1$ and different weights $p \in \{0.5, 0.75, 0.9\}$ are shown in Figure \ref{fig:PlotFct}. 
%\newpage
%

\begin{figure}[t]
	\centering
	\includegraphics[scale=0.5075]{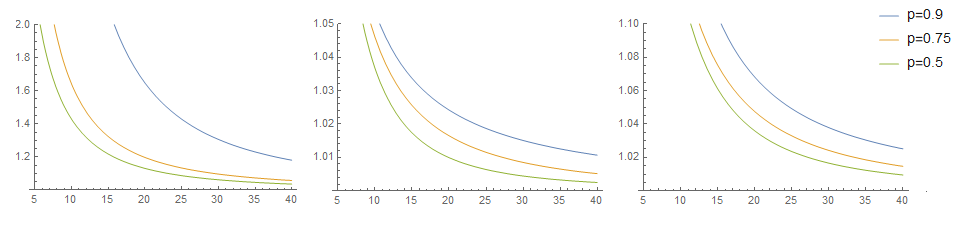}
	\caption{From left to right: $g_\mathcal{E}(r), 	g_{\mathcal{E},\mathcal{V}}(r),$ and $g_\mathcal{V}(r)$ for $t=1$.}
	\label{fig:PlotFct}
\end{figure}
\begin{theorem} \label{thm:VertexPair}
	Let  $ Y_t $  be a Mondrian tessellation with weight $p\in(0,1)$ and time parameter $t>0$. Then 
	\begin{eqnarray*}
		g_\mathcal{V}(r)
		&=& 1 +\frac{1}{t^2 r^2 p^2 (1-p)^2} \Bigg[ 4 -\frac{2}{t r p^2 } + \frac{1}{t^2 r^2 p^4 }  -\frac{2}{ t r (1-p)^2} +\frac{1}{t^2 r^2 (1-p)^4} \\
		\\
		&& - e^{-t r p^2 }\Bigg(\frac{1}{2}-\frac{1}{ t r p^2 }+\frac{1}{t^2 r^2 p^4 }\Bigg) - e^{-t r (1-p)^2 } \Bigg(\frac{1}{2} -\frac{1}{t r (1-p)^2} +\frac{1}{t^2 r^2 (1-p)^4 }\Bigg)\Bigg].
	\end{eqnarray*}
\end{theorem}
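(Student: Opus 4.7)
The plan is to follow the template used in Sections~\ref{sec:EdgePair} and \ref{sec:CrossCorr} for $g_\mathcal{E}(r)$ and $g_{\mathcal{E},\mathcal{V}}(r)$: first express the Ripley-type $K$-function $K_\mathcal{V}(r)$ through the second factorial moment of the vertex point process, then evaluate that moment by tracking the STIT construction, and finally differentiate in $r$ and normalise. By the Palm formula and stationarity, $\lambda K_\mathcal{V}(r)=\lambda^{-1}\int_{R_{r,p}}\rho^{(2)}(x)\,\dint x$, where $\lambda=t^2p(1-p)$ is the vertex intensity and $\rho^{(2)}$ is the two-point intensity of the stationary process $\mathcal{V}_t$. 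Since $R_{r,p}=[0,r(1-p)]\times[0,rp]$, the $r$-derivative of this double integral splits into two one-dimensional integrals along the upper edges of $R_{r,p}$; this is the structural origin of the $p\leftrightarrow 1-p$ symmetry of the claimed formula.

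To access $\rho^{(2)}$, I would decompose $\mathcal{V}_t=\mathcal{V}_t^{HV}\sqcup\mathcal{V}_t^{VH}$ according to whether the terminating edge at a T-vertex is horizontal or vertical. This yields four contributions, linked in pairs by the $p\leftrightarrow 1-p$ symmetry, so only two of them need to be computed from scratch. For a pair of vertices at $0$ and $x=(x_1,x_2)\in R_{r,p}$, one writes down the two splitting times $\tau_1,\tau_2\in(0,t]$ at which the vertices were created, the Lebesgue density for the positions of the two creating lines, the weight factors $p$ or $1-p$ for the respective directions, and the probabilities that the ancestral cells containing $0$ and $x$ survive until the appropriate times. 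The iteration-stability of $Y_t$ turns these survival probabilities into exponentials of the form $\exp(-\tau\,\Lambda_p([\,\cdot\,]))$ in which only the projected length of a separating strip between the two vertices appears; this is the source of the factors $e^{-trp^2}$ and $e^{-tr(1-p)^2}$ in the statement.

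Once $\rho^{(2)}$ has been written in closed form, what remains is the integration $\int_{R_{r,p}}\rho^{(2)}(x)\,\dint x$, the $r$-derivative, and the division by $2p(1-p)r$. The building-block integrals that appear are of the form $\int_0^{rp} s^{k}e^{-tsp^{2}}\,\dint s$ for small integer $k$ and their $(1-p)$-counterparts; a short chain of integrations by parts converts them into precisely the rational-plus-exponential combination in the theorem, and the auxiliary lemmas of Section~\ref{sec:Aux} shorten the bookkeeping. The computation parallels the one carried out for $g_{\mathcal{E},\mathcal{V}}(r)$ in Theorem~\ref{thm:CrossCorr}, with the main novelty being the second T-vertex endpoint that needs to be accounted for.

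The hard part will be the correct identification of the joint creation event for a pair of vertices in the STIT dynamics: both vertices must coexist at time $t$, so for each of the four type-combinations one must track, at every intermediate time, the history of the ancestral cells containing one or both of the two points. The case distinction between \emph{the two vertices share a common ancestral cell up to some time} and \emph{they are already separated by an earlier cut} is what couples the two otherwise independent-looking time integrals and is responsible for the exponential terms rather than a purely polynomial answer. Once this coupling has been cleanly encoded, the remaining algebraic manipulations are routine.
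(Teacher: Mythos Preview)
Your plan is a genuine departure from the paper's route and, as written, contains a structural gap.

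\textbf{What the paper does.} The paper does not compute $\rho^{(2)}$ by tracking ancestral cells of two points. It imports \cite[Theorem~2]{ST-Plane}, which already packages the STIT dynamics into a covariance measure of the form
\[
\Cov(\mathcal{V}_t)=\tfrac12\!\int\!\Delta^e\!\otimes\!\Delta^e\,\mathcal{I}^1(\cdots)+\!\int\!(\Delta^e\!\otimes\!\Lambda_p[\cdot\cap e]+\text{sym.})\,\mathcal{I}^2(\cdots)+4\!\int\!\Lambda_p[\cdot\cap e]^{\otimes 2}\,\mathcal{I}^3(\cdots),
\]
the integrals being over the segment intersection measure $\ll(\Lambda_p\times\Lambda_p)\cap\Lambda_p\gg$. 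The three tiers correspond to whether the two vertices are (i) the two endpoints of the same maximal edge, (ii) one endpoint and one later T-vertex on that edge, or (iii) two later T-vertices on a common edge. Everything that follows in the paper is then a mechanical specialisation of this formula to $\Lambda_p$ using Lemma~\ref{lem:IntegralExpressions}, Lemma~\ref{lem:DoubleIntegral0zDirac}, \eqref{DerivativeT3}--\eqref{DerivativeT2} and Lemma~\ref{lem:ValueIntegralCalc}.

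\textbf{Where your proposal is thin.} Your decomposition $\mathcal{V}_t=\mathcal{V}_t^{HV}\sqcup\mathcal{V}_t^{VH}$ classifies individual vertices by the direction of the terminating edge, but the covariance structure is governed by how a \emph{pair} of vertices shares a common maximal edge, not by their individual types. Your description ``the two splitting times $\tau_1,\tau_2$ at which the vertices were created'' treats each vertex as a one-time event, whereas a T-vertex involves at least two times (the birth of the edge it lies on and the birth of the edge it terminates); for a pair this gives up to four times, and integrating them out is exactly what produces the iterated operators $\mathcal{I}^1,\mathcal{I}^2,\mathcal{I}^3$ rather than the single-layer integrals $\int_0^{rp}s^k e^{-tsp^2}\dint s$ you anticipate. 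The ``share a common ancestral cell vs.\ already separated'' dichotomy you mention is also not the right one: once two points are in different cells the contributions factor and cancel against the product of intensities, so the entire covariance lives on the event that both vertices sit on a common maximal edge born at some intermediate time.

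In short, your approach can be made to work, but only after you rediscover the edge-based three-tier decomposition that \cite[Theorem~2]{ST-Plane} already supplies; the $HV/VH$ split will not organise the computation for you.
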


\begin{remark}\rm 
	Again, we compare our result with the correlation functions for isotropic STIT tessellations in the plane and the rectangular Poisson line process. 
	\begin{itemize}
		\item[(i)] In the isotropic case Schreiber and Thäle showed in \cite{ST2013} that the pair-correlation function of the random edge length measure $\mathcal{E}_{t}$ has the form 
		\begin{equation*}
			g_\mathcal{E}(r)  = 1 + \frac{1}{2t^2 r^2}\Big(1-e^{-\frac{2}{\pi} tr}\Big).
		\end{equation*}
		In \cite{ST2012} the same authors showed that the pair-correlation function of the vertex point process $\mathcal{V}_t$ and the cross-correlation function of the random edge length measure and the vertex point process are given by 
		\begin{equation*}
			g_{\mathcal{E},\mathcal{V}}(r) = 1+ \frac{1}{t^2 r^2}-\frac{\pi}{4t^3r^3}- \frac{e^{-\frac{2}{\pi}tr}}{2t^2r^2}\Big(1-\frac{\pi}{2tr}\Big)
		\end{equation*}
		and 
		\begin{equation*}
			g_\mathcal{V}(r) = 1 + \frac{2}{t^2r^2} - \frac{\pi}{t^3r^3} + \frac{\pi^2}{4t^4r^4} - \frac{e^{-\frac{2}{\pi}tr}}{2t^2r^2} \Big( 1 - \frac{\pi}{tr} + \frac{\pi^2}{2t^2r^2} \Big).
			%g^{PLT(t)}(r) &= 1 + \frac{4}{\pi t r}
		\end{equation*}
		
		\item[(ii)] For the rectangular Poisson line process as given in Remark \ref{rem:CompVariances} one can use the theorem of Slivnyak-Mecke (see for example \cite[Example 4.3]{SKM95}) to deduce that the corresponding analogues of the cross- and pair-correlation functions are given by
		$$
		g_\mathcal{E}(r) = 1 + \frac{1}{tr},\qquad g_{\mathcal{E},\mathcal{V}}(r) = 1 + \frac{1}{4trp(1-p)}\qquad\text{and}\qquad g_\mathcal{V}(r)=1 + \frac{1}{2tr p^2(1-p)^2 }.
		$$
	\end{itemize}
\end{remark}

\section{Variance calculations for Mondrian tessellations}\label{sec:Variances}

\subsection{The point intersection measure and expectations}

First and second order properties of $Y_t(W)$ are typically accessible to us via the functionals $\Sigma_\phi$ and $A_\phi$ as given in \eqref{eq:FuncAllgSigma} and \eqref{eq:FuncAllgA} for specific choices of $\phi$ by means of a martingale results from \cite[Proposition 1]{ST-Plane}. In what follows we concentrate on the two specific cases $\phi\equiv 1$ and $\phi(\,\cdot\,)=\Lambda_p([\,\cdot\,])$  and define the two measures that we need to formulate expectations and variances. The first is the so-called \textit{point intersection measure} on  $\RR^2$ given by
\begin{equation*} %\label{eq:LambdaIntersecDef}
	\ll \Lambda_p \cap\Lambda_p\gg(\,\cdot\,) := \int_{[\RR^2]}\int_{[L]}{\bf 1}(L\cap L'\in\,\cdot\,)\,\Lambda_p(\dint L')\Lambda_p(\dint L),
\end{equation*}
which is the main tool in the proof of Proposition~\ref{Prop:Expectation}.

\begin{proof}[Proof of Proposition~\ref{Prop:Expectation}]
	From the results in \cite[Section 2.3]{ST-Plane} we conclude that
	\begin{eqnarray} \label{eq:ExpVertexAndLampbdaAllg}
		\EE[\Sigma_{\Lambda_p}(Y_t(W))]=t \ll \Lambda_p \cap \Lambda_p \gg (W) \textup{ \quad and \quad}
		\EE[\Sigma_1(Y_t(W))]=t \Lambda_p([W])+ \frac{t^2}{2} \ll \Lambda_p \cap \Lambda_p \gg (W).
	\end{eqnarray}
	Since for measurable $A \subset \RR^2$ one has that
	\begin{align}\label{eq:IntersectionMeasure}
		\ll \Lambda_p\cap\Lambda_p\gg(A) &= \int_{[\RR^2]}\int_{[L]}{\bf 1}(L\cap L'\cap A\neq\emptyset)\,\Lambda_p(\dint L')\Lambda_p(\dint L)\notag\\
		&=2p(1-p)\int_\RR\int_\RR{\bf 1}((E_2 + \tau e_1)\cap(E_1 + \sigma e_2)\cap A\neq\emptyset)\,\dint\sigma \, \, \dint\tau\notag\\
		&=2p(1-p)\int_\RR\ell_{1}((E_2+\tau e_1)\cap A)\,\dint\tau \notag\\
		&=2p(1-p)\ell_2(A),
	\end{align}
	it follows from \eqref{eq:ExpVertexAndLampbdaAllg} that
	\[
	\EE[\Sigma_{\Lambda_p}(Y_t(W))]= 2tp(1-p)\ell_2(W)
	\]
	and  by also using \eqref{eq:Lambda}
	\[
 	\EE[\Sigma_1(Y_t(W))]=t \big(p\, \ell_{1}(\Pi_{E_2}W) + (1-p)\,\ell_{1}(\Pi_{E_1}W)\big)+ t^2 p(1-p)\,\ell_2(W).
	\]
	The expression for $W=[-a,a] \times [-b,b]$ with $a,b >0$ is now straight forward.
\end{proof}

\subsection{The segment intersection measure and variances}    

In a next step, we introduce the \textit{segment intersection measure} on the space of line segments $\mathcal{S}(\RR^2)$ by
\begin{eqnarray} \label{eq:DefSegIntersecMeasure}
	\ll (\Lambda_p\times\Lambda_p)\cap \Lambda_p\gg ( \, \cdot \,) :=\int_{[\RR^2]}\int_{[L]}\int_{[L]} \delta_{L(L_1, L_2)}(\, \cdot \,) \, \Lambda_p(\dint L_1) \, \Lambda_p(\dint L_2) \, \Lambda_p(\dint L),
\end{eqnarray}
where $\delta_x$ denotes the Dirac measure at a point $x\in\RR^2$. For a measurable $A \subset \RR^2$ we obtain that $\ll (\Lambda_p\times\Lambda_p)\cap \Lambda_p\gg (\mathcal{S}(A))$ equals
\begin{align*}
	&\int_{[\RR^2]}\int_{[L]}\int_{[L]} \delta_{L(L_1, L_2)}(\mathcal{S}(A)) \, \Lambda_p(\dint L_1) \, \Lambda_p(\dint L_2) \, \Lambda_p(\dint L)\\
	\\
	&=\int_{[\RR^2]}\int_{[L]}\int_{[L]} {\bf 1}\{L(L_1, L_2) \subset A\} \, \Lambda_p(\dint L_1) \, \Lambda_p(\dint L_2) \, \Lambda_p(\dint L)\\
	\\
	&= p\, \int_{\RR}\int_{[E_1 + \sigma e_2]}\int_{[E_1 + \sigma e_2]} {\bf 1}\{\overline{(E_1 + \sigma e_2)\cap L_1, (E_1 + \sigma e_2) \cap L_2} \subset A \} \, \Lambda_p(\dint L_1) \, \Lambda_p(\dint L_2)
	%\Lambda_p(\dint (L_1,L_2)) \, 
	\dint \sigma\\
	\\
	& \, \, \, \, + (1-p)\, \int_{\RR}\int_{[E_2 + \sigma e_1]}\int_{[E_2 + \sigma e_1]} {\bf 1}\{\overline{(E_2 + \sigma e_1)\cap L_1, (E_2 + \sigma e_1) \cap L_2 } \subset A\}\, \Lambda_p(\dint L_1) \, \Lambda_p(\dint L_2)
	%\Lambda_p(\dint (L_1,L_2)) 
	\, \dint \sigma.
\end{align*}
In order for $(E_1 + \sigma e_2) \cap L_i$ and $(E_2 + \sigma e_1) \cap L_i, \, i \in \{1,2\},$ to be non-empty, the lines $L$ and $L_i$ have to be perpendicular, i.e., for $i\in\{1,2\}$ we either have
\begin{enumerate}
	\item[i)] {$(E_1 + \sigma e_2) \cap L_i = (E_1 + \sigma e_2) \cap (E_2 + \tau_i e_1) = (\tau_i, \sigma)$ for some $\tau_i \in \RR$} or
	\item[ii)] $(E_2 + \sigma e_1) \cap L_i = (E_2 + \sigma e_1) \cap (E_1 + \tau_i e_2) = (\sigma, \tau_i)$ for some $ \tau_i \in \RR$.
\end{enumerate}
This leads to
\begin{align}\label{eq:SegmentIntersectionMeasure}
	\ll (\Lambda_p\times\Lambda_p)\cap \Lambda_p\gg (\mathcal{S}(A)) &= p(1-p)^2\, \int_{\RR}\int_{\RR}\int_{\RR} {\bf 1}\{ \overline{(\tau_1, \sigma), (\tau_2, \sigma)}\subset A \}\,\dint \tau_1\, \dint \tau_2\, \dint \sigma\notag\\
	\notag\\
	& \, \, \, \, + (1-p)p^2\, \int_{\RR}\int_{\RR}\int_{\RR} {\bf 1}\{\overline{(\sigma, \tau_1), (\sigma, \tau_2)}\subset A\}\,\dint \tau_1 \, \dint \tau_2 \,\dint \sigma.
\end{align}
Having the point and segment intersection measures at hand, we now proceed to prove Theorem \ref{Thm:Variances}.

\begin{proof}[Proof of Theorem \ref{Thm:Variances}]
	Applying \cite[Theorem 1]{ST-Plane}, we combine Equations (13) and (14) in \cite{ST-Plane} to obtain that 
%	with
%		$$
%		T_n^{\exp}(u)=\sum_{k=n}^\infty \frac{u^k}{k!}	
%		$$
	$\Var(\Sigma_{\Lambda_p}(Y_t(W)))$ is the same as 
	\begin{eqnarray*} %\label{eq:VarLambda_p}
	%	\nonumber && - \int_{\mathcal{S}(W)} \frac{T_1^{\exp}(-t\Lambda_p([e]))}{\Lambda_p([e])} \ll(\Lambda_p \times \Lambda_p) \cap \Lambda_p \gg(\dint e)\\
		%\nonumber &=&
		&&\int_0^t \int_{[W]} \int_{[L\cap W]} \int_{[L\cap W]} \exp(-u\Lambda_p([L(L_1,L_2)]))\ \Lambda_p(\dint L_2)\, \Lambda_p(\dint L_1)\, \Lambda_p(\dint L)\,\dint u\\
		\nonumber &=&\!\!\!\!-\int_{[W]} \int_{[L\cap W]} \int_{[L\cap W]} \frac{1}{\Lambda_p([L(L_1,L_2)])} \Big(\exp(-t\Lambda_p([L(L_1,L_2)]))-1\Big)\Lambda_p(\dint L_2)\, \Lambda_p(\dint L_1)\, \Lambda_p(\dint L)
	\end{eqnarray*}
	by Fubini's theorem.
	As outlined above, in order for $ \Lambda_{p}([L(L_1,L_2)]) $ to be non-zero we need
	$$L=E_i+\sigma e_j, \quad L_1=E_j+\tau e_i, \quad \text{ and }\quad L_2=E_j+\vartheta e_i$$
		for some $\sigma \in \Pi_{E_{j}}(W)$  and $\tau, \vartheta \in \Pi_{E_{i}}(W)$, $ i,j \in \{1,2\}$ with $ i \neq j $.
	In the integral above we then have
	\begin{equation*} %\label{eq:SIM1}
		\Lambda_p([L(L_1,L_2)])=\ (1-p) \,|\tau-\vartheta| \ \1\{(\tau,\sigma)\in W\} \1\{(\vartheta,\sigma)\in W\}
	\end{equation*}
	for $  L=E_1+\sigma e_2 $,  $ L_1=E_2+\tau e_1 \text{ and } L_2=E_2+\vartheta e_1 $ 
	and
	\begin{equation*} %\label{eq:SIM2}
		\Lambda_p([L(L_1,L_2)])=\ p \, |\tau-\vartheta| \ \1\{(\sigma,\tau)\in W\} \1\{(\sigma,\vartheta)\in W\},
	\end{equation*}
	if $  L=E_2+\sigma e_1 $, $ L_1=E_1+\tau e_2 \text{ and } L_2=E_1+\vartheta e_2 .$ Hence,
	\begin{align*} 
		\Var(\Sigma_{\Lambda_p}(Y_t(W)))  &= -\int_{[W]} \int_{[L\cap W]} \int_{[L\cap W]} \frac{1}{\Lambda_p([L(L_1,L_2)])} \Big(\exp(-t\Lambda_p([L(L_1,L_2)]))-1\Big)\\
		&\qquad \qquad  \qquad  \qquad \qquad \qquad \qquad  \qquad \times \Lambda_p(\dint L_2)\, \Lambda_p(\dint L_1)\, \Lambda_p(\dint L) \\
		&= \displaystyle- \Bigg(p(1-p)^2\int_{-b}^b \int_{-a}^a \int_{-a}^a \frac{1}{|\tau-\vartheta| } \Big(\exp(-t(1-p)|\tau-\vartheta|)-1\Big)\ \dint \vartheta\,\dint \tau\, \dint \sigma\\
		& \displaystyle\qquad \qquad  +(1-p)p^2\int_{-a}^a \int_{-b}^b \int_{-b}^b \frac{1}{|\tau-\vartheta| } \Big(\exp(-tp|\tau-\vartheta|)-1\Big)\dint \vartheta\,\dint \tau\, \dint \sigma\Bigg)\\
		&= \displaystyle -2\Bigg(p(1-p)^2\int_{-b}^b \int_{-a}^a \int_{-a}^\tau \frac{1}{\tau-\vartheta } \Big(\exp(-t(1-p)(\tau-\vartheta))-1\Big) \ \dint \vartheta\,\dint \tau\, \dint \sigma\\
		& \displaystyle\qquad \qquad +(1-p)p^2\int_{-a}^a \int_{-b}^b \int_{-b}^\tau \frac{1}{\tau-\vartheta } \Big(\exp(-tp(\tau-\vartheta))-1\Big) \ \dint \vartheta\,\dint \tau\, \dint \sigma\Bigg)
	\end{align*}
	by Fubini's theorem. For the inner integrals we obtain
	\begin{align*}
		&\int_{-a}^\tau \frac{1}{\tau-\vartheta } \Big(\exp(-t(1-p)(\tau-\vartheta))-1\Big) \ \dint \vartheta = \int_{0}^{\tau+a} \frac{1}{\vartheta }(\exp(-t(1-p)\vartheta)-1) \,  \dint \vartheta \\
		&= \int_{0}^{\tau+a} \frac{1}{\vartheta }\sum_{k=1}^\infty (-1)^k \frac{(t(1-p)\vartheta)^k}{k!}\  \dint \vartheta \ = \ \sum_{k=1}^\infty (-1)^k \frac{(t(1-p))^k}{k!}\frac{(\tau+a)^k}{k}
	\end{align*}
	and 
	\begin{align*}
		\int_{-b}^\tau \frac{1}{\tau-\vartheta } \Big(\exp(-tp(\tau-\vartheta))-1\Big) \ \dint \vartheta = \sum_{k=1}^\infty (-1)^k \frac{(tp)^k}{k!}\frac{(\tau+b)^k}{k},
	\end{align*}
	respectively. Furthermore,
	\begin{eqnarray*}
		\int_{-b}^b\int_{-a}^a \sum_{k=1}^\infty (-1)^k \frac{(t(1-p))^k}{k!}\frac{(\tau+a)^k}{k} \ \dint \tau \ \dint \sigma &=& %2b\ \sum_{k=1}^\infty (-1)^k \frac{(t(1-p))^k}{k!k} \frac{(2a)^{k+1}}{k+1}\\
		%&=& 
		4ab\ \sum_{k=1}^\infty (-1)^k \frac{(2at(1-p))^k}{(k+1)!k} 
	\end{eqnarray*}
	and 
	\begin{align*}
		\int_{-a}^a \int_{-b}^b \sum_{k=1}^\infty (-1)^k \frac{(tp)^k}{k!}\frac{(\tau+b)^k}{k} \ \dint \tau \ \dint \sigma= 4ab\ \sum_{k=1}^\infty (-1)^k \frac{(2btp)^k}{(k+1)!k}.
	\end{align*}
	Recalling the definition of $g_1(x)$ from the statement of Theorem \ref{Thm:Variances} (i), proves the result. %
	We now turn to the variance of $ \Sigma_1(Y_t(W))$ in Theorem \ref{Thm:Variances} (ii).
	Combining Proposition 1 and Equation (12) in \cite{ST-Plane} gives
	\begin{align*}
	    \Var(\Sigma_1(Y_t(W)))= t \Lambda_p([W])+ 3 \int_0^t \EE[\Sigma_{\Lambda_p}(Y_s(W))] \, \dint s+ 2 \int_0^t \int_0^s  \Var(\Sigma_{\Lambda_p}(Y_u(W))) \,\dint u.
	\end{align*}
	Furthermore, Equation (8) in \cite{ST-Plane} shows that
	\begin{align}\label{eq:IntExpSigmaLambda}
	   	\int_0^t \EE[\Sigma_{\Lambda_p}(Y_s(W))] \, \dint s= \frac{t^2}{2}\ll\Lambda_p \cap \Lambda_p\gg(W). 
	\end{align}
	
	Equations~\eqref{eq:Lambda} and \eqref{eq:IntersectionMeasure} now yield
	$$	t \Lambda_p([W])= 2tbp+2ta(1-p) \textup{ \qquad and \qquad}	\frac{3}{2}t^2\ll\Lambda_p \cap \Lambda_p \gg(W)= 12t^2 ab\, p(1-p).
	$$
	For the last term we use Theorem \ref{Thm:Variances} (i) to see that
	\begin{align*} %\label{eq:VarSigma1Id}
	%	&2 \int_{\mathcal{S}(W)} \frac{T_3^{\exp}(-t\Lambda_p([e]))}{\Lambda_p([e])^3} \ll(\Lambda_p \times \Lambda_p) \cap \Lambda_p \gg(\dint e)=
&	2\int_{0}^t\int_0^s \Var(\Sigma_{\Lambda_p}(Y_u(W)))\,\dint u\, \dint s\notag\\
		\quad &=-16abp(1-p)\int_{0}^t\int_0^s\Big((1-p)g_1(2a(1-p)u)+ p g_1(2bpu)\Big) \,\dint u\, \dint s.
	\end{align*}
 Now,
	\begin{align*}
		\int_{0}^t\int_0^s g_1(2a (1-p)u) \,\dint u\, \dint s=\sum_{k=1}^{\infty}(-1)^k \frac{(2a(1-p))^k}{k(k+1)!} \int_{0}^t\int_0^s u^k \,\dint u\, \dint s = t^2\, \sum_{k=1}^{\infty}(-1)^k \frac{(2a(1-p)t)^k }{k(k+1)(k+2)!}, 
	\end{align*}
	and similarly
	\begin{align*}
		\int_{0}^t\int_0^s g_1(2bup) \,\dint u\, \dint s= t^2\, \sum_{k=1}^{\infty}(-1)^k \frac{(2bpt)^k }{k(k+1)(k+2)!}. 
	\end{align*}
	 It follows that
	\begin{align*}
		&\Var(\Sigma_{1}(Y_t(W)))\\
		&= 2tbp+2ta(1-p)+12t^2abp(1-p)+ 16ab  t^2p(1-p)\,\big((1-p) g_2(2a(1-p)t)+p g_2(2btp)\big)
	\end{align*}
	proving claim the second claim. %
	To deal with the covariance in Theorem \ref{Thm:Variances} (iii), we  use \cite[Equation (1)]{ST-Plane} in combination with \eqref{eq:IntExpSigmaLambda} to see that
	\begin{align*}%\label{eq:CovId}
		\Cov(\Sigma_{\Lambda_p}(Y_t(W)), \Sigma_{1}(Y_t(W))) %&=  t\ll\Lambda_p  \cap \Lambda_p \gg(W) + \int_{\mathcal{S}(W)} \frac{T_2^{\exp}(-t\Lambda_p([e]))}{\Lambda_p([e])^2} \ll(\Lambda_p \times \Lambda_p) \cap \Lambda \gg(\dint e)\notag\\
		&=t\ll\Lambda_p  \cap \Lambda_p \gg(W)+\int_0^t \Var(\Sigma_{\Lambda_p}(Y_s(W))) \, \dint s.
	\end{align*}
 Proceeding as above, we get
	\begin{align*}
		\int_0^t \Var(\Sigma_{\Lambda_p}(Y_s(W))) \, \dint s &= -8abp(1-p)\int_0^t (1-p) g_1(2a(1-p)s)+p g_1(2bps) \, \dint s\\
		&=-8ab p(1-p) \sum_{k=1}^\infty (-1)^k \frac{(1-p)(2a(1-p))^k+p(2bp)^k}{k(k+1)!} \int_0^t s^k \, \dint s\\
		&=-8tabp(1-p)  \sum_{k=1}^\infty (-1)^k \frac{(1-p)(2at(1-p))^k+p(2btp)^k} %(1-p)p^2(2btp)^k}
		{k(k+1)(k+1)!}\\
		&=-8tabp(1-p)\Big((1-p)g_3(2at(1-p))+pg_3(2btp)\Big),
	\end{align*}
	which in combination with \eqref{eq:IntersectionMeasure} yields the result.
\end{proof}

\begin{proof}[Proof of Corollary \ref{cor:VarianceAsymptotic}]
	One can check that $g_{1}(x)\asymp-\log(x)$, $g_{2}(x) \asymp - \frac{1}{2} \log(x)$ and that $g_{3}(x) \asymp \log(x)$, as $x\to\infty$. The result then follows from Theorem \ref{Thm:Variances}.
\end{proof}

%---------------- Ausgelagerte Integralberechnungen ----------------------------
\section{Auxiliary computations}\label{sec:Aux}
In order to prove our results on the Mondrian pair- and cross-correlation functions, we will use results for a more general setting provided in \cite[Section 3]{ST-Plane}. The subsequent section gives the auxiliary computations necessary to adapt these results to our set-up of Mondrian tessellations with  weighted axis-parallel cutting directions.
%In this section we collect a number of  necessary to prove our results on the Mondrian pair- and cross-correlation functions.
We start with some general observations for rectangular STIT  tessellations that we will use repeatedly throughout the proofs. For a segment $ e $ of a STIT tessellation $Y_t$ on $\RR^2$ we introduce the measure
\begin{equation} \label{eq:DeltaE}
	\Delta^e := \sum_{x \in \text{Vertices}(e)} \delta_x,
\end{equation}
where we denote by \text{Vertices}(e) the set of all vertices in the relative interior of the maximal edge $e$. Furthermore, for  $e \in Y_t$ we define the measure
\begin{equation}\label{eq:LambdaE}
	(\Lambda[ \cdot \cap e])(A) := \Lambda([ A\cap e]),
\end{equation}
on $ \RR^2 $ (cf.\! \cite[Section 3.2]{ST-Plane}). Also, for bounded measurable functions $ f :\RR^2\rightarrow \RR $ with bounded support and $t>0$ we define the integrals
\begin{equation}\label{eq:I(n)1}
	\mathcal{I}^1(f(s);t) := \int_0^t f(s) \,  \dint s,
\end{equation}
and 
\begin{equation}\label{eq:I(n)2}
	\mathcal{I}^n(f(s);t) := \int_0^t \int_0^{s_1} \ldots \int_0^{s_{n-1}} f(s) \,  \dint s \,  \dint s_{n-1} \ldots \dint s_1 = \frac{1}{(n-1)!} \int_0^t (t-s)^{n-1}f(s) \dint s,
\end{equation}
for integers $n\geq 2$.
We now look at the measures defined in \eqref{eq:DeltaE} and \eqref{eq:LambdaE} in our set up.
Since we are dealing with rectangular tessellations, the endpoints of any segment need to coincide in one coordinate, i.e.,  segments parallel to $ E_1 $ have endpoints $ (\tau, \sigma) $ and $(\omega, \sigma )$, endpoints of segments parallel to $ E_2 $ are of the form  $ (\sigma,\tau ) $ and $(\sigma, \omega )$. For ease of notation we will write  $ \overline{(\tau \omega)}_\sigma $ for segements of the first and $ {_\sigma}{\overline{(\tau \omega)} }$ for  segments of the second kind. We now see that 
\begin{align}\label{eq:LambdaSegmentIntersection}
	\Lambda_p([\cdot \cap \overline{ (\tau,\omega)_\sigma}])=
	(1-p) \,\ell_{1}(\cdot \cap \overline{ (\tau,\omega)}_\sigma), 
	\quad \qquad 
	\Lambda_p ([\cdot \cap \overline{ {_\sigma}{(\tau,\omega)}} ])=p \,\ell_{1}(\cdot \cap \overline{ {_\sigma}{(\tau,\omega)}})
\end{align}
and
\begin{align} \label{eq:Delta}
	\Delta^{e} = \begin{cases}
		\delta_{(\tau ,\sigma)} +  \delta_{(\omega, \sigma)}  &\text{ for }\ e= \overline{(\tau \omega)}_\sigma,\\[0.2cm]
		\delta_{(\sigma, \tau)} +  \delta_{(\sigma, \omega)}  &\text{ for }\ e= {_\sigma}{\overline{(\tau \omega)} }.
	\end{cases}
\end{align}
We can now use \eqref{eq:LambdaSegmentIntersection} and \eqref{eq:Delta} to simplify the expression in \cite[Theorem 3]{ST-Plane}. To do so,
the lemma below gives simplified expressions for integrals over product measures where one or both of the measures is a Dirac measure or the $1$-dimensional Lebesgue measure of the intersection of a line segment with a subset of $ \RR^2 $.
\begin{lemma}\label{lem:IntegralExpressions}
	Let $ A, B \in \cB(\RR^2) $. 
	\begin{itemize}
		\item[(i)] For  any $ q \in [0,1 ]$ and $j \in \mathbb{N}$ we have
		\begin{align*}
			&\int_{\RR} \int_{\RR} \int_{\RR}(\delta_{(\tau,\sigma)}\otimes\ell_1(\cdot\cap  \overline{(\tau \vartheta)}_\sigma)(A \times B) \,  \mathcal{I}^j\big(s^2 \exp(-s q|\tau-\vartheta| );t\big)\ \dint \vartheta \,  \dint \tau \,  \dint \sigma\\
			&\qquad 	=\int_{\RR^2} \delta_{w}(A)\int_{\RR}\, \ell_1(B-w\cap  \overline{(0 z)}_0))\, \mathcal{I}^j\big(s^2 \exp(-s q|z| );t\big)   \,\dint z \, \dint w
			\intertext{and }
			& \int_{\RR} \int_{\RR} \int_{\RR}(\delta_{(\sigma, \tau)}\otimes\ell_1(\cdot\cap  {_\sigma}{\overline{(\tau \vartheta)}})(A \times B)  \,\mathcal{I}^j\big(s^2 \exp(-s q|\tau-\vartheta| );t\big)\ \dint \vartheta \,  \dint \tau \,  \dint \sigma\\
			&\qquad 	=\int_{\RR^2} \delta_{w}(A)\int_{\RR}\ell_1(B-w\cap  \overline{{_0}{(0z)}}))\, \mathcal{I}^j\big(s^2 \exp(-s q|z| );t\big)  \, \dint z \, \dint w.%
		\end{align*}
		\item[(ii)] For any $ q \in [0,1 ]$ and  $ j \in \mathbb{N} $  we have
		\begin{align*}
			&\int_{\RR} \int_{\RR} \int_{\RR}(\ell_1(\cdot\cap  \overline{(\tau \vartheta)}_\sigma)\otimes\ell_1(\cdot\cap  \overline{(\tau \vartheta)}_\sigma)(A \times B))  \,\mathcal{I}^j\big(s^2 \exp(-s q|\tau-\vartheta| );t\big)\ \dint \vartheta \,  \dint \tau \,  \dint \sigma\\
			& \quad 	 =   \int_{\RR^2} \delta_{w}(A) \int_{\RR} \Big(\int_{ \overline{(0z)}_0}  \int_{  \overline{(0z)}_0}    \delta_{y-x}(B-w)\, \dint x \, \dint y\Big)\,\mathcal{I}^j\big(s^2 \exp(-sq\vert z\vert) ;t\big)\,  \dint z \, \dint w
			\intertext{ and }
			&\int_{\RR} \int_{\RR} \int_{\RR}(\ell_1(\cdot\cap  {_\sigma}{\overline{(\tau \vartheta)}}) \otimes \ell_1(\cdot\cap  {_\sigma}{\overline{(\tau \vartheta)}}))(A \times B))  \,\mathcal{I}^j\big(s^2 \exp(-s q|\tau-\vartheta| );t\big)\ \dint \vartheta \,  \dint \tau \,  \dint \sigma\\
			& \quad 	 =   \int_{\RR^2} \delta_{w}(A)  \int_{\RR} \Big(\int_{ {_0}{\overline{(0z)}}}  \int_{  {_0}{\overline{(0z)}}}    \delta_{y-x}(\cdot)\, \dint x \, \dint y\Big)\,  \mathcal{I}^j\big(s^2 \exp(-sq\vert z\vert) ;t\big)\, \dint z \, \dint w.
		\end{align*}
		\item[(iii)] For any $ q \in [0,1] $ and $j \in \mathbb{N}$ we have
		\begin{align*}\
			&\int_{\RR} \int_{\RR} \int_{\RR}(\delta_{(\tau,\sigma)}\otimes\delta_{(\tau,\sigma)})(A \times B) \, \mathcal{I}^j\big(s^2 \exp(-s q|\tau-\vartheta| );t\big)\ \dint \vartheta \,  \dint \tau \,  \dint \sigma\\
			&\qquad=  \int_{\RR^2} \delta_{\omega}(A) \int_{\RR}  \delta_{\mathbf{0}}(B-w)\,\mathcal{I}^j\big(s^2 \exp(-sq\vert z\vert) ;t\big)\,  \dint z \, \dint  w,
			\intertext{where $\mathbf{0}:= (0,0)$. Furthermore,}
			\\
			&\int_{\RR} \int_{\RR} \int_{\RR}(\delta_{(\tau,\sigma)}\otimes\delta_{(\vartheta,\sigma)})(A \times B) \, \mathcal{I}^j\big(s^2 \exp(-s q|\tau-\vartheta| );t\big)\ \dint \vartheta \,  \dint \tau \,  \dint \sigma\\
			&\qquad=  \int_{\RR^2} \delta_{w}(A) \int_{\RR}  \delta_{(z,0)}(B-w)\,\mathcal{I}^j\big(s^2 \exp(-sq\vert z\vert) ;t\big)\,  \dint z \, \dint  w
			\intertext{ and }
			&\int_{\RR} \int_{\RR} \int_{\RR}(\delta_{(\sigma,\tau)}\otimes\delta_{(\sigma,\vartheta)})(A \times B) \, \mathcal{I}^j\big(s^2 \exp(-s q|\tau-\vartheta| );t\big)\ \dint \vartheta \,  \dint \tau \,  \dint \sigma\\
			&\qquad=  \int_{\RR^2} \delta_{w}(A) \int_{\RR}  \delta_{(0,z)}(B-w)\,\mathcal{I}^j\big(s^2 \exp(-sq\vert z\vert) ;t\big)\,  \dint z \, \dint  w.
		\end{align*}
	\end{itemize}
\end{lemma}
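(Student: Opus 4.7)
The plan is to prove all six identities by a common change-of-variables scheme combined with translation invariance of Lebesgue measure on $\RR^2$. The three parts (i)--(iii) differ only in whether the factors in the product measure are Dirac masses or one-dimensional Lebesgue measures on the segments $\overline{(\tau\vartheta)}_\sigma$ and ${_\sigma}{\overline{(\tau\vartheta)}}$, but share the same underlying substitution, so I would treat them in parallel rather than case by case.

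In each case I would first expand $(\mu_1\otimes\mu_2)(A\times B)=\mu_1(A)\mu_2(B)$, rewriting each Lebesgue factor via Fubini as $\ell_1(X\cap\overline{(\tau\vartheta)}_\sigma)=\int_{\overline{(\tau\vartheta)}_\sigma}\1_X(y)\,\dint y$ where needed. I would then apply the substitution $w:=(\tau,\sigma)$ in the horizontal cases (and $w:=(\sigma,\tau)$ in the vertical ones), together with $z:=\vartheta-\tau$, so that $\dint\tau\,\dint\sigma=\dint w$, $\dint\vartheta=\dint z$, $|\tau-\vartheta|=|z|$, and the segments transform by translation into $w+\overline{(0z)}_0$, respectively $w+{_0}{\overline{(0z)}}$. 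For parts (i) and (iii) the claimed identities then follow immediately: the Dirac point $(\tau,\sigma)$ becomes $w$, the weight in $\mathcal{I}^j$ becomes $s^2\exp(-sq|z|)$, and the remaining factor reduces to either $\ell_1((B-w)\cap\overline{(0z)}_0)$ or one of $\delta_{\mathbf 0}(B-w)$, $\delta_{(z,0)}(B-w)$, $\delta_{(0,z)}(B-w)$, depending on whether $(\tau,\sigma)$ and $(\vartheta,\sigma)$ (or $(\sigma,\tau),(\sigma,\vartheta)$) are involved.

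Part (ii) requires one additional step. After the same substitution the integrand contains a factor
\[
\int_{\overline{(0z)}_0}\int_{\overline{(0z)}_0}\1_A(w+y)\,\1_B(w+x)\,\dint x\,\dint y.
\]
Invoking translation invariance in the outer $\dint w$-integral with the shift $w\mapsto w-y$ converts this into $\1_A(w)\,\1_B(w+x-y)$, and relabelling $x\leftrightarrow y$ via the symmetry of the double segment integral yields $\1_A(w)\,\1_B(w+y-x)=\delta_w(A)\,\delta_{y-x}(B-w)$, which is exactly the form appearing in the statement. The vertical analogue is handled identically with ${_0}{\overline{(0z)}}$ in place of $\overline{(0z)}_0$.

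The main obstacle is not conceptual but purely notational: one must ensure in part (ii) that the reordering of integrations (Fubini) together with the translation shift in $w$ is applied consistently, and that the argument correctly handles the sign of $z$, since $\overline{(0z)}_0$ denotes the same point set whether $z>0$ or $z<0$. Once these bookkeeping details are tracked, each of the six identities follows from elementary measure-theoretic manipulations, and no further input is required beyond translation invariance of $\ell_2$ and Fubini's theorem.
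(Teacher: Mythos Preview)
Your proposal is correct and follows essentially the same approach as the paper: the same substitutions $w=(\tau,\sigma)$ (resp.\ $w=(\sigma,\tau)$) and $z=\vartheta-\tau$, and for part (ii) the same use of translation invariance of $\ell_2$ in the $w$-variable to absorb one of the two segment parameters. The only cosmetic difference is that in (ii) the paper substitutes $\tilde w=w+x$ directly, whereas you shift $w\mapsto w-y$ and then swap $x\leftrightarrow y$; these are equivalent one-line variants of the same translation argument.
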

\begin{proof}
	In parts (i) and (ii) both cases can be shown with similar arguments, so that we will only prove one of them at a time. In part (iii) this holds true for the second and third claim, so that we will prove the first and second equality here. To prove (i), note that $ \ell_1(B\cap \overline{(\tau \vartheta)}_\sigma)=\ell_1(B-(\tau,\sigma)\cap \overline{(0 (\vartheta-\tau))}_0) $ for any $\tau, \vartheta, \sigma \in \RR$, so that 
	\begin{align*}
		& \int_{\RR} \int_{\RR} \int_{\RR}\delta_{(\tau,\sigma)}(A) \, \ell_1(B\cap  \overline{(\tau \vartheta)}_\sigma)\ \mathcal{I}^j\big(s^2 \exp(-sq\vert \tau-\vartheta \vert) ;t\big) \,\dint \vartheta \, \dint \tau \,  \dint \sigma\\[0.2cm]
		&= \int_{\RR} \int_{\RR}\delta_{(\tau,\sigma)}(A)\int_{\RR}\, \ell_1(B-(\tau,\sigma)\cap  \overline{(0 (\vartheta-\tau))}_0))\ \mathcal{I}^j\big(s^2 \exp(-sq| \tau-\vartheta| ) ;t\big) \, \dint \vartheta \, \dint \tau \, \dint \sigma.
	\end{align*}
	Substituting $ z=\vartheta-\tau $ we obtain that the above is equal to
	\begin{align*}
		\int_{\RR} \int_{\RR}\delta_{(\tau,\sigma)}(A)\int_{\RR}\, \ell_1(B-(\tau,\sigma)\cap  \overline{(0 z)}_0))\ \mathcal{I}^j\big(s^2 \exp(-s q|z| );t\big)\,   \dint z \, \dint \tau \, \dint \sigma.
	\end{align*}
	which shows (i) by putting $\omega=(\tau,\sigma)$.
	To prove part (ii), note that
	\begin{align*}
		&\int_{\RR}  \int_{\RR}  \int_{\RR} \ell_1(A \cap \overline{(\tau \vartheta)}_\sigma ) \ell_1(B \cap \overline{(\tau \vartheta)}_\sigma )\,\mathcal{I}^j\big(s^2 \exp(-sq\vert \tau-\vartheta \vert) ;t\big)\, \dint \tau \, \dint \vartheta \, \dint \sigma \\[0.2cm]
		&=  \int_{\RR}  \int_{\RR}  \int_{\RR} \Big(\int_{ \overline{(\tau \vartheta)}_\sigma}  \int_{ \overline{(\tau \vartheta)}_\sigma} \delta_x(A) \delta_y(B)\, \dint x \, \dint y\Big) \,\mathcal{I}^j\big(s^2 \exp(-sq\vert \tau-\vartheta \vert) ;t\big)\,\dint \tau \, \dint \vartheta \, \dint \sigma \\[0.2cm]
		&=  \int_{\RR}  \int_{\RR}  \int_{\RR} \Big(\int_{ \overline{(0(\vartheta-\tau))}_0}  \int_{  \overline{(0(\vartheta-\tau))}_0} \delta_x(A-(\tau,\sigma)) \delta_y(B-(\tau,\sigma))\, \dint x \, \dint y\Big) \\
		&\qquad \qquad \qquad \qquad\qquad \qquad\qquad \qquad\qquad \qquad\times\mathcal{I}^j\big(s^2 \exp(-sq\vert \tau-\vartheta \vert) ;t\big)\,\dint \tau \, \dint \vartheta \, \dint \sigma \\[0.2cm]
		&=  \int_{\RR^2}  \int_{\RR} \Big(\int_{ \overline{(0z)}_0}  \int_{  \overline{(0z)}_0}    \delta_{w+x}(A)\delta_{y}(B-w)\,\mathcal{I}^j\big(s^2 \exp(-sq\vert z\vert) ;t\big) \dint x \, \dint y\Big)\,  \dint z \, \dint w,
	\end{align*}
	where we substituted $z=\vartheta-\tau$ and put $ (\tau,\sigma)=w $. Substituting $\tilde{w}=w+x$ we end up with
	\begin{align*}
		\int_{\RR^2} & \int_{\RR} \Big(\int_{ \overline{(0z)}_0}  \int_{  \overline{(0z)}_0}    \delta_{w+x}(A)\delta_{y}(B-w)\, \dint x \, \dint y\Big)\,\mathcal{I}^j\big(s^2 \exp(-sq\vert z\vert) ;t\big)\,  \dint z \, \dint w\\[0.2cm]
		&=    \int_{\RR^2} \delta_{\tilde{w}}(A) \int_{\RR} \Big(\int_{ \overline{(0z)}_0}  \int_{  \overline{(0z)}_0}    \delta_{y-x}(B-\tilde{w})\, \dint x \, \dint y\Big)\,\mathcal{I}^j\big(s^2 \exp(-sq\vert z\vert) ;t\big)\,  \dint z \, \dint  \tilde{w},
	\end{align*}
	which proves the first equality in (ii). We now turn to the proof of the first part of item (iii).	Substituting $ z=\vartheta-\tau $ and setting $w:= (\tau, \sigma)$ we obtain	
	\begin{align*}
		&\int_{\RR} \int_{\RR} \int_{\RR} \, \delta_{(\tau ,\sigma)}(A)\delta_{(\tau ,\sigma)}(B) \,  \mathcal{I}^j\big(s^2 \exp(-sq| \tau-\vartheta| ) ;t\big)  \dint \vartheta\, \dint \tau \, \dint \sigma\\[0.2cm]
		& \quad =	\int_{\RR^2} \delta_{w}(A)\, \int_{\RR} \delta_{\mathbf{0}}(B - w)  \,\mathcal{I}^j\big(s^2 \exp(-sq|z| ) ;t\big) \, \dint z\,  \dint w.
	\end{align*}
	Using the same substitution as above we get
	\begin{align*}
		&\int_{\RR} \int_{\RR} \int_{\RR} \, \delta_{(\tau ,\sigma)}(A)\delta_{(\vartheta ,\sigma)}(B) \,  \mathcal{I}^j\big(s^2 \exp(-sq| \tau-\vartheta| ) ;t\big)  \, \dint \vartheta\,\dint \tau \, \dint \sigma\\[0.2cm]
		& \quad =	\int_{\RR^2} \delta_{w}(A)\, \int_{\RR} \delta_{(z,0)}(B - w)  \mathcal{I}^j\big(s^2 \exp(-sq|z| ) ;t\big)  \dint z\,  \dint w
	\end{align*}
	for the second term in (iii). Since the third expression can be handled in an analogous way, this completes the proof.
\end{proof}
Recall that in order to deduce pair- and cross-correlation functions of the vertex and the random edge length process we will observe a planar weighted Mondrian $Y_t(W)$ in a growing window $ rR_p $, where $ R_p=[0,1-p] \times [0,p] $. It is then straightforward to see that,
for $ z > 0 $, we have
\begin{align}\label{eq:DeltaRp}
	\delta_{(z,0)}(rR_p) = \mathbf{1}_{[0, (1-p)r]}(z),\qquad \text{ and } \qquad  \delta_{(0,z)}(rR_p) = \mathbf{1}_{[0, pr]}(z).
\end{align}

Furthermore, it holds that for  $z \leq 0$ we have $\ell_1(rR_p \cap \overline{(0z)}_0)=\ell_1(rR_p \cap \overline{{_0}{(0z)}})=0$ and
\begin{align}\label{eq:IntersectionRecLineSegE1}
	\ell_1(rR_p\cap \overline{(0z)}_0)=\begin{cases}
		z &\text{ for } z \in (0,r(1-p)],\\
		(1-p)r &\text{ for } z >r(1-p)
	\end{cases}
\end{align}
as well as
\begin{align}\label{eq:IntersectionRecLineSegE2}
	\ell_1(rR_p\cap \overline{{_0}{(0z)}})=\begin{cases}
		z &\text{ for } z \in (0,rp],\\
		pr &\text{ for } z >rp.
	\end{cases}
\end{align}
\begin{lemma}\label{lem:DoubleIntegral0zDirac}
	Take $ R_{r,p}=[0,u_{1}(r,p)]\times [0, u_2(r,p)] $ with non-negative functions $u_1,u_2:(0,\infty)\times [0,1]\rightarrow (0,\infty)$. It then  holds that 
	\begin{align*}\int_{\overline{{(0z)}}_0 } \int_{ \overline{{(0z)}}_0 }\delta_{y-x}( R_{r,p})\, \dint x \, \dint y \ = \
		\begin{cases}
			zu_1(r,p)- \frac{u_1(r,p)^2}{2} & \text{ for } |z| \geq u_1(r,p),\\[0.2cm]
			\frac{z^2}{2} & \text{ for } 0<|z| < u_1(r,p),
		\end{cases}
		\intertext{ and }
		\int_{   \overline{{_0}{(0z)}}}  \int_{  \overline{{_0}{(0z)}} }\delta_{y-x}( R_{r,p})\, \dint x \, \dint y \ = \
		\begin{cases}
			zu_2(r,p)- \frac{u_2(r,p)^2}{2} & \text{ for } |z| \geq u_2(r,p),\\[0.2cm]
			\frac{z^2}{2} & \text{ for } 0<|z| < u_2(r,p).
		\end{cases}
	\end{align*}
	
\end{lemma}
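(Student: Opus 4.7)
\medskip

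The plan is to reduce each of the two double integrals to a concrete Lebesgue computation on the real line by parametrising the segments, and then split into the two stated cases. For the first identity, note that $\overline{(0z)}_0$ is the horizontal segment from $(0,0)$ to $(z,0)$, so it admits the parametrisation $x=(s,0)$, $y=(t,0)$ with $s,t$ ranging in the interval of endpoints $0$ and $z$, of length $|z|$; after the obvious reflection $s\mapsto -s$, $t\mapsto -t$ when $z<0$, we may assume $z\geq 0$ without loss of generality. Under this parametrisation $y-x=(t-s,0)$, and since $R_{r,p}=[0,u_1(r,p)]\times[0,u_2(r,p)]$ contains the origin, we have
\[
\delta_{y-x}(R_{r,p})=\mathbf{1}\{0\leq t-s\leq u_1(r,p)\}.
\]
Hence the left-hand side equals
\[
\int_0^{|z|}\int_0^{|z|}\mathbf{1}\{0\leq t-s\leq u_1(r,p)\}\,\dint s\,\dint t.
\]

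The remaining step is an elementary case distinction. Performing the inner integration in $t$ yields $\int_0^{|z|}\bigl(\min(|z|,s+u_1(r,p))-s\bigr)\,\dint s$. If $|z|\leq u_1(r,p)$, then $s+u_1(r,p)\geq |z|$ throughout $[0,|z|]$, so the integrand equals $|z|-s$ and integration gives $z^2/2$. If $|z|>u_1(r,p)$, split the $s$-integration at $s=|z|-u_1(r,p)$: on $[0,|z|-u_1(r,p)]$ the integrand equals $u_1(r,p)$, contributing $u_1(r,p)(|z|-u_1(r,p))$, while on $[|z|-u_1(r,p),|z|]$ it equals $|z|-s$, contributing $u_1(r,p)^2/2$. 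Adding the two pieces yields $zu_1(r,p)-u_1(r,p)^2/2$ (using $|z|=z$ in this reduction). This is exactly the first claim. The second identity is entirely symmetric: the segment $\overline{{_0}{(0z)}}$ is vertical, so parametrising $x=(0,s)$, $y=(0,t)$ leads to the analogous one-dimensional integral with $u_2(r,p)$ in place of $u_1(r,p)$, and the same case analysis gives the stated formula.

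There is essentially no obstacle beyond bookkeeping: one only has to be careful that the sign of $z$ does not affect the result (which is clear because the segment is unoriented and the area measured by the indicator is invariant under the reflection described above) and that the endpoint cases $|z|=u_i(r,p)$ match across the two branches, which they trivially do since $z u_1(r,p)-u_1(r,p)^2/2$ and $z^2/2$ both equal $u_1(r,p)^2/2$ at $|z|=u_1(r,p)$ (and similarly for $u_2(r,p)$).
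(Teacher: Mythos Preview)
Your proof is correct and follows essentially the same route as the paper's: parametrise the segment by a single real coordinate, rewrite $\delta_{y-x}(R_{r,p})$ as the indicator $\mathbf{1}\{0\le t-s\le u_i(r,p)\}$, and evaluate the resulting one-dimensional integral by splitting at $|z|-u_i(r,p)$. The only cosmetic differences are that the paper treats the vertical segment first (you do the horizontal one) and writes the inner integration in the other variable; the computations are otherwise line-for-line the same.
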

\begin{proof} We start by proving the second claim. 
	Note that for $x,y \in \overline{{_0}{(0z)}}$
	\[
	\delta_{y-x}(R_{r,p})=\delta_{(0,y)}(R_{r,p} +(0,x))=1 \qquad \text{if and only if} \qquad  y \in [x,x+u_2(r,p)].
	\] Assume $ z\geq 0 $, the case $ z<0 $ follows an analogous line of argumentation. For $ z \geq  u_2(r,p) $ we have
	\begin{align*}
		\int_{ \overline{{_0}{(0z)}}}  \int_{  \overline{{_0}{(0z)}}} \delta_{y-x}(R_{r,p})\, \dint y \, \dint x& = \int_{ 0}^z  \int_{ x}^{(x +u_2(r,p)) \wedge z } 1 \ \dint y \, \dint x\\[0.2cm]
		&= \int_{ 0}^{z-u_2(r,p)}  \int_{ x}^{x +u_2(r,p)} 1 \ \dint y \, \dint x+\int_{ z-u_2(r,p)}^{z}  \int_{ x}^{z} 1\ \dint y \, \dint x\\
		&= u_2(r,p)(z-u_2(r,p))+ \int_{z-u_2(r,p)}^z (z-x) \, \dint x\\[0.2cm]
		&= u_2(r,p)(z-u_2(r,p))+\frac{u_2(rp)^2}{2}=zu_2(r,p)-\frac{(u_2(r,p))^2}{2}
	\end{align*}
	and for $z \in [0,u_2(r,p))$ we get
	\begin{align*}
		\int_{ \overline{{_0}{(0z)}}}  \int_{  \overline{{_0}{(0z)}}} \delta_{y-x}(R_{r,p})\, \dint y \, \dint x& = \int_0^z \int_x^z 1\, \dint x \, \dint y=\int_{0}^z (z-x) \, \dint x= \frac{z^2}{2}.
	\end{align*}
	The proof of the first claim works in an analogous way. 
\end{proof}

To deal with the Mondrian pair- and cross-correlation functions we will need to differentiate terms of the form  
$$ u(r,p)^k \int_0^{u(r,p)} f(z) \ \mathcal{I}^j\big(s^2 \exp(-sqz) ;t\big) \dint z \qquad \textup{  and  } \qquad u(r,p)^k \int_{u(r,p)}^{\infty} f(z)\  \mathcal{I}^j\big(s^2 \exp(-sqz) ;t\big) \dint z $$
with respect to $ r $ for a differentiable function $u: \RR\times [0,1] \rightarrow \RR^+$ and $k \in \NN$. Note that  for $ k \geq 1 $ we have
\begin{align}\label{eq:DerivativeIntegralI}
	&\frac{\dint }{\dint r}\Bigg[ u(r,p)^k \int_0^{u(r,p)} f(z)  \,\mathcal{I}^j\big(s^2 \exp(-sqz) ;t\big)\, \dint z\Bigg]\notag\\[0.2cm]
	&\qquad= k \, u(r,p)^{k-1}\, u'(r,p)\, \int_0^{u(r,p)} f(z) \, \mathcal{I}^j\big(s^2 \exp(-sqz) ;t\big)\, \dint z\notag\\[0.2cm]
	&\qquad \quad+  u(r,p)^k\, u'(r,p)\, f((u(r,p))) \ \mathcal{I}^j\big(s^2 \exp(-squ(r,p)) ;t\big)
\end{align}
and
\begin{align}\label{eq:DerivativeIntegralII}
	&\frac{\dint }{\dint r}\Bigg[ u(r,p)^k \int_{u(r,p)}^\infty f(z)  \,\mathcal{I}^j\big(s^2 \exp(-sqz) ;t\big) \,\dint z\Bigg]\notag\\[0.2cm]
	&\qquad=k\, u(r,p)^{k-1} \,u'(r,p)\, \int_{u(r,p)}^\infty f(z) \, \mathcal{I}^j\big(s^2 \exp(-sqz) ;t\big) \,\dint z\notag\\[0.2cm]
	&\qquad \quad - u(r,p)^k \,u'(r,p)\, f((u(r,p)))\ \mathcal{I}^j\big(s^2 \exp(-squ(r,p)) ;t\big),
\end{align}
where $ u' $ denotes the partial derivative of $u$ with respect to the $ r $-coordinate.
Combining Lemma~\ref{lem:DoubleIntegral0zDirac} with	Equations~\eqref{eq:DerivativeIntegralI} we obtain
\begin{align}\label{DerivativeT3}
	&\frac{\dint}{\dint r}\Bigg[ \int_{\RR} \Big(\int_{\overline{{_0}{(0z)}}} \int_{ \overline{{_0}{(0z)}}} \delta_{y-x}(R_{r,p})\, \dint y \, \dint x\Big) \,\mathcal{I}^j\big(s^2 \exp(-sq z) ;t\big)\,\dint z\Bigg]\notag \\
	&\quad =\frac{\dint}{\dint r}\Bigg[ \int_{0}^{u_{2}(r,p)}z^2\,\mathcal{I}^j\big(s^2 \exp(-sq z) ;t\big)\,   \dint z+  2\int_{u_{2}(r,p)}^\infty \Big(zu_{2}(r,p)-\frac{(u_{2}(r,p))^2}{2}\Big)\,\mathcal{I}^j\big(s^2 \exp(-sqz) ;t\big)\,   \dint z\Bigg]\notag\\
	\notag\\
	&\quad=u_{2}(r,p)^2\,  u_{2}'(r,p)\ \mathcal{I}^j\big(s^2 \exp(-squ_{2}(r,p)) ;t\big)+2 u_{2}'(r,p)\, \int_{u_{2}(r,p)}^\infty \, z\,\mathcal{I}^j\big(s^2 \exp(-sqz) ;t\big) \,\dint z\notag\\[0.2cm]
	&\qquad-2u_{2}(r,p)^2u_{2}'(r,p)\, \mathcal{I}^j\big(s^2 \exp(-sq u_{2}(r,p))  ;t\big) \notag\\
	& \qquad -2u_2(r,p)\, u'_2(r,p) \int_{u_{2}(r,p)}^\infty \, \mathcal{I}^j\big(s^2 \exp(-s(1-p)z) ;t\big) \,\dint z\notag\\[0.2cm]
	&\qquad +u_{2}(r,p)^2 u_{2}'(r,p)\, \mathcal{I}^j\big(s^2 \exp(-sq u_{2}(r,p))  ;t\big)\notag\\[0.2cm]
	&\quad=2 u_{2}'(r,p)\, \int_{u_{2}(r,p)}^\infty \, z\,\mathcal{I}^j\big(s^2 \exp(-sqz) ;t\big)\, \dint z-2u_2(r,p)\, u'_2(r,p) \int_{u_{2}(r,p)}^\infty \, \mathcal{I}^j\big(s^2 \exp(-s(1-p)z) ;t\big)\, \dint z\notag\\[0.2cm]
	&\quad=2 u_{2}'(r,p)\, \int_{u_{2}(r,p)}^\infty \Big( z- u_{2}(r,p))\Big)\ \mathcal{I}^j\big(s^2 \exp(-sqz) ;t\big)\, \dint z.
\end{align}
Proceeding analogously with the first integral in Lemma~\ref{lem:DoubleIntegral0zDirac} we obtain
	\begin{align}\label{DerivativeT2}
		&\frac{\dint}{\dint r}\Bigg[ \int_{\RR} \Big(\int_{\overline{{(0z)_0}}}\int_{\overline{{(0z)_0}}}  \delta_{y-x}(R_{r,p})\, \dint y \, \dint x\Big) \,\mathcal{I}^j\big(s^2 \exp(-sq z) ;t\big)\,\dint z\Bigg]\notag \\
		&\quad=2 u_{1}'(r,p)\, \int_{u_{1}(r,p)}^\infty \Big( z- u_{1}(r,p))\Big)\ \mathcal{I}^j\big(s^2 \exp(-sqz) ;t\big)\, \dint z.
	\end{align}
When calculating pair- and cross-correlation functions, we will come across several integrals of functions $f(z) = z^i \mathcal{I}^j\big(s^2 \exp(-sz) ;t\big)$ for $i,j \in \{1,2,3\}$. We collect  their exact values in the following lemma for easy reference. 
\begin{lemma} \label{lem:ValueIntegralCalc}
	Let $q \in [0,1], t\in [0, \infty), s \in [0,t], i,j \in \{1,2,3\}$ and $\mathcal{I}^j$ as defined in \eqref{eq:I(n)1} and \eqref{eq:I(n)2}. Then it holds that
	\allowdisplaybreaks
	
	\begin{align*}
		& \mathcal{I}^j(s^2\exp(-sq);t)= q^{-(j+2)}\bigg(\sum_{r=0}^{j-1}\Big[(-1)^{r+j+1}(qt)^r
		\frac{(j+1-r)!}{(j-1-r)!\,r!}\Big]-\exp(-qt)\big(qt(qt+2j)+\frac{(j+1)!}{(j-1)!}\big)\bigg)
		\intertext{ and , for $y \ge 0$,}
		&\int_{y}^\infty z^k\,  \mathcal{I}^j(s^2\exp(-sqz);t)\, \dint z\\ 
		&\qquad =y^{-(j+1-k)}q^{-(2+j)}\bigg( (-1)^{j+1} \sum_{r=0}^{j-1}(-1)^{r}\frac{k+j-r}{r!}(qyt)^r+(-1)^j\exp(-tqy)\Big((k+j)+qty\Big)\bigg).
	\end{align*}
\end{lemma}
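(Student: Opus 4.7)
The plan is to prove both identities by direct computation starting from the definition
\[
\mathcal{I}^j(f(s);t) = \frac{1}{(j-1)!}\int_0^t (t-s)^{j-1} f(s)\, \dint s
\]
in \eqref{eq:I(n)2}. For the first identity, I would expand $(t-s)^{j-1}$ via the binomial theorem and reduce the computation to evaluating finitely many integrals of the form $\int_0^t s^n e^{-qs}\, \dint s$. Each of these equals
\[
\int_0^t s^n e^{-qs}\, \dint s = \frac{n!}{q^{n+1}}\Big(1 - e^{-qt}\sum_{\ell=0}^n \frac{(qt)^\ell}{\ell!}\Big),
\]
as is seen by $n$-fold integration by parts (or by recognising the lower incomplete gamma function). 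Substituting into the binomial expansion and then collecting separately the coefficients of the monomials $(qt)^r$ and of $e^{-qt}(qt)^r$ yields the polynomial-plus-exponential form claimed in the first display.

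For the second identity, I would first apply Fubini to swap the order of integration:
\[
\int_y^\infty z^k \mathcal{I}^j(s^2 e^{-sqz};t)\, \dint z = \frac{1}{(j-1)!}\int_0^t (t-s)^{j-1}s^2 \int_y^\infty z^k e^{-sqz}\, \dint z \, \dint s.
\]
The inner integral is an upper incomplete gamma and evaluates to
\[
\int_y^\infty z^k e^{-az}\, \dint z = \frac{k!\, e^{-ay}}{a^{k+1}}\sum_{r=0}^k \frac{(ay)^r}{r!}
\]
with $a=sq$. After substituting this in and cancelling the resulting powers of $s$, the remaining $s$-integral decomposes into a combination of integrals of the form $\int_0^t (t-s)^{j-1} s^{r+1-k} e^{-sqy}\, \dint s$, which are again handled by the lower-gamma identity above after a harmless shift of the exponent. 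Collecting the resulting coefficients of $(qyt)^r$ and of $e^{-tqy}(qyt)^r$ reproduces the compact closed form on the right-hand side.

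The main obstacle is purely algebraic bookkeeping: both computations naturally generate nested double sums which then have to be reorganised into the single-sum form stated. Since the hypothesis restricts $j$ (and in effect $k$) to $\{1,2,3\}$, the most efficient route is to carry out the template above separately for each relevant $(j,k)$ pair. This sidesteps a fully general reindexing argument and also lets one handle case-by-case the delicate behaviour at $s=0$ in the $s$-integral and at $z=\infty$ in the $z$-integral of the second identity.
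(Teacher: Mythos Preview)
The paper does not actually include a proof of this lemma; it is stated as a purely computational fact and then immediately used in Sections~\ref{sec:EdgePair}--\ref{sec:VertexPair}. Your proposed route---binomial expansion of $(t-s)^{j-1}$, reduction to the lower incomplete gamma identity $\int_0^t s^n e^{-qs}\,\dint s=\tfrac{n!}{q^{n+1}}\big(1-e^{-qt}\sum_{\ell=0}^n\tfrac{(qt)^\ell}{\ell!}\big)$ for the first formula, and Fubini followed by the upper incomplete gamma identity for the second---is correct and is exactly how one would verify these identities in practice. Your observation that restricting $j$ (and $k$) to the finitely many values actually needed makes the algebraic reindexing into a single sum unnecessary is well taken; the paper's ``$i,j\in\{1,2,3\}$'' is precisely this restriction (with $i$ playing the role of your $k$), and in the applications only $k\in\{0,1\}$ occurs, which is also the range in which the $z$-integral converges and the $s^{r+1-k}$ factor in your swapped integral is integrable at $s=0$.
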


\allowdisplaybreaks

%----------------------------------- Section Surface Covariance Measure ----------------------------

\section{Surface covariance measure and edge pair correlations} \label{sec:EdgePair}

This section aims to show Theorem \ref{thm:EdgePair}. For a weighted planar Mondrian tessellation $Y_t$ on $\RR^2$ with driving measure $\Lambda_p$ the covariance measure $\Cov(\mathcal{E}_{t})$ of its random edge length measure $\mathcal{E}_{t}$ is defined as the measure on $\RR^2 \times \RR^2$ given by the relation
\begin{align*}
	\int_{(\RR^2)^2} (f \otimes g)(x) \, \Cov(\mathcal{E}_{t})(\dint x) = \Cov\Big(\Sigma_{J^f}(Y_t),\Sigma_{J^g}(Y_t)\Big) ,
\end{align*}
for bounded and measurable $f,g:\RR^2 \to \RR$ with bounded support, where $J^f$ is the edge functional

$$J^f(e) := \int\limits_e f(x) \,\dint x.$$
Slightly adapting the calculations in \cite[Section 3.3]{ST-Plane} and using the martingale arguments from \cite[Equation (3)]{ST-Plane} to calculate expectations for the functional $A_{J^f J^g}$ as in \eqref{eq:FuncAllgA}, we get 
\begin{align*}
	\int_{(\RR^2)^2} (f \otimes g)(x)  \, \Cov(\mathcal{E}_{t})(\dint x) %&= \Cov\Big(\sum\limits_{J^f}Y_t,\sum\limits_{J^g}Y_t\Big) \\
	& = \int\limits_0^t \EE A_{J^f J^g}(Y_s) \,\dint s \\
	&= \frac{1}{2} \int\limits_{\mathcal{S}(\RR^2)} J^f(e)J^g(e) \int\limits_0^t s^2e^{s\Lambda_{p}[e]}ds  \ll (\Lambda_p \times \Lambda_p) \cap \Lambda_p \gg (\dint e).
\end{align*}
By the definition of $J^f$, $J^g$ and the segment intersection measure from \eqref{eq:DefSegIntersecMeasure}, we obtain the following form for the covariance measure:
\begin{align}\label{eq:EdgeCov}
	\nonumber \Cov&(\mathcal{E}_{t}) ( \cdot \times \cdot) = \frac{1}{2} \int\limits_{\mathcal{S}(\RR^2)} \ell_1(\cdot \cap e) \otimes \ell_1(\cdot \cap e) ( \cdot \times \cdot )\, \mathcal{I}^1(s^2e^{-s\Lambda_p[e]};t) \, \ll(\Lambda_p \times \Lambda_p) \cap \Lambda_p \gg (\dint e)\\
	\nonumber=& \frac{1}{2} \int\limits_{[\RR^2]} \int\limits_{[L]} \int\limits_{[L]} \ell_1(\cdot \cap L(L_1,L_2)) \otimes \ell_1(\cdot \cap L(L_1,L_2))( \cdot \times \cdot )\,  \mathcal{I}^1 (s^2 e^{-s \Lambda_p([L(L_1,L_2)])};t) \\
	\nonumber & \qquad \qquad \qquad \qquad  \qquad \qquad \qquad \qquad  \qquad  \qquad \qquad  \times \,  \Lambda_p(\dint L_1) \, \Lambda_p(\dint L_2)\Lambda_p(\dint L) \, \\
	%\Lambda_p^3(\dint (L,L_1,L_2))\\
	%
	\nonumber =& \frac{1}{2} p(1-p)^2 \int\limits_\RR \int\limits_\RR \int\limits_\RR \ell_1(\cdot \cap \overline{(\tau \vartheta)}_\sigma) \otimes \ell_1(\cdot \cap \overline{(\tau \vartheta)}_\sigma) \, ( \cdot \times \cdot )\, \mathcal{I}^1 (s^2 e^{-s(1-p)|\tau-\vartheta|};t) \,\dint \tau \,\dint \vartheta\,\dint \sigma\\
	&\quad +  \frac{1}{2} p^2(1-p) \int\limits_\RR \int\limits_\RR \int\limits_\RR \ell_1(\cdot \cap {_\sigma}{\overline{(\tau \vartheta)} }) \otimes \ell_1(\cdot \cap {_\sigma}{\overline{(\tau \vartheta)} }) \, ( \cdot \times \cdot )\, \mathcal{I}^1 (s^2 e^{-sp|\tau-\vartheta|};t) \,\dint \tau \,\dint \vartheta\,\dint \sigma.
\end{align}
Having established the covariance measure of the edge process $\mathcal{E}_{t}$, we now aim at giving the corresponding pair-correlation function $g_\mathcal{E}(r)$. In a first step towards this we need to establish the reduced covariance measure $\widehat{\Cov}(\mathcal{E}_{t})$  defined by the relation
\begin{align*}
\Cov(\mathcal{E}_{t})(A\times B)= \int_{A} \int_{B-x}\widehat{\Cov}(\mathcal{E}_{t})(\dint y)\ \ell_2(\dint x)
\end{align*}
for a measurable product $A\times B\subset\RR^2\times\RR^2$ (cf. \cite[Corollary 8.1.III]{DaleyVerejones}). % We apply the covariance measure $\Cov (\mathcal{E}_{t})$ to a measurable product $A\times B\subset\RR^2\times\RR^2$ and just 
We now examine the first of the two integral summands in \eqref{eq:EdgeCov}. Using Lemma~\ref{lem:IntegralExpressions}(ii) we see that
\begin{align}\label{eq:DoubleIntegralLebesgue}
	&\, \,\frac{1}{2} p(1-p)^2 \int\limits_\RR \int\limits_\RR \int\limits_\RR \ell_1(A \cap \overline{(\tau \vartheta)}_\sigma) \, \ell_1(B \cap \overline{(\tau \vartheta)}_\sigma) \, \mathcal{I}^1 (s^2 e^{-s(1-p)|\tau-\vartheta|};t)\, \dint \tau\,\dint \vartheta\,\dint \sigma \notag\\
		&= \frac{1}{2} p(1-p)^2 \int\limits_{\RR^2} \delta_{w}(A) \int\limits_\RR \int\limits_{\overline{(0 z)}_0} \int\limits_{\overline{(0 z)}_0} \delta_{y-x}(B-\tilde w) \, \dint x \, \dint y \, \mathcal{I}^1 (s^2 e^{-s(1-p)|z|};t) \, \dint z \, \dint w.
\end{align}

Proceeding analogously with the second summand in \eqref{eq:EdgeCov} and using the diagonal shift argument from \cite[Corollary 8.1.III]{DaleyVerejones}, we get the reduced covariance measure $  \widehat{\Cov}(\mathcal{E}_{t})$ on $\RR^2$:
\begin{align*}
	\widehat{\Cov}(\mathcal{E}_{t})( \, \cdot \, ) &= \frac{1}{2} p(1-p)^2 \int\limits_\RR \int\limits_{\overline{(0 z)}_0} \int\limits_{\overline{(0 z)}_0} \delta_{y-x}(\cdot ) \, \dint x \, \dint y \, \mathcal{I}^1 (s^2 e^{-s(1-p)|z|};t) \, \dint z\\
	&\quad +  \frac{1}{2} p^2(1-p) \int\limits_\RR \int\limits_{{_0}\overline{(0 z)}} \int\limits_{{_0}\overline{(0 z)}} \delta_{y-x}(\cdot ) \, \dint x \, \dint y \, \mathcal{I}^1 (s^2 e^{-s(1-p)|z|};t) \, \dint z.
\end{align*}
Noting that the intensity of the random measure $\mathcal{E}_{t}$ is just $t$, see \cite[Equation (8)]{ST-Plane}, we apply Equation (8.1.6) in \cite{DaleyVerejones} to see that the corresponding reduced second moment measure $\widehat{\mathcal{K}}(\mathcal{E}_{t})$ is
\begin{align*}
\widehat{\mathcal{K}}(\mathcal{E}_{t})( \cdot) &= \widehat{\Cov}(\mathcal{E}_{t})( \, \cdot \, ) + t^2 \ell_2(\cdot).
\end{align*}
While the classical Ripley's K-function would be $t^{-2}$ times the $\widehat{\mathcal{K}}(\mathcal{E}_{t})$-measure of a disc of radius $r>0$, we define our Mondrian analogue as
\begin{align*} %\label{eq:Edge-KFunction}
	K_{\mathcal{E}}(r):=\frac{1}{t^2} \, \widehat{\mathcal{K}}(\mathcal{E}_{t})(R_{r,p}), 
\end{align*}
where $R_{r,p}:=rR_p$ with $R_p:=[0,1-p] \times [0, p]$ as before. 
Calculating $K_{\mathcal{E}}(r)$ explicitly via Lemma \ref{lem:DoubleIntegral0zDirac} yields %
\allowdisplaybreaks
\begin{align*}
	K_{\mathcal{E}}(r) &= r^2p(1-p) +\frac{p(1-p)^2}{2t^2}  \int\limits_\RR \int\limits_{\overline{(0 z)}_0} \int\limits_{\overline{(0 z)}_0} \delta_{y-x}(R_{r,p}  ) \, \dint x \, \dint y \, \mathcal{I}^1 (s^2 e^{-s(1-p)|z|};t) \, \dint z\\
	&\quad +  \frac{p^2(1-p)}{2t^2}  \int\limits_\RR \int\limits_{{_0}\overline{(0 z)}} \int\limits_{{_0}\overline{(0 z)}} \delta_{y-x}(R_{r,p} ) \, \dint x \, \dint y \, \mathcal{I}^1 (s^2 e^{-s(1-p)|z|};t) \, \dint z.
\end{align*}

With \eqref{DerivativeT3} and \eqref{DerivativeT2} %\eqref{eq:DerivativeIntegralI} and \eqref{eq:DerivativeIntegralII}, 
this gives
\begin{eqnarray*}
	\frac{d}{dr}K_{\mathcal{E}}(r) &=& 2rp(1-p) + \frac{p(1-p)^{3}}{t^2} \int\limits_{(1-p)r}^{\infty} (z - r(1-p)) \, \mathcal{I}^1 (s^2 e^{-s(1-p)z};t) \, \dint z\\
	&& + \frac{(1-p)p^3}{t^2}  \int\limits_{rp}^{\infty}( z - rp ) \,  \mathcal{I}^1 (s^2 e^{-spz};t) \,  \dint z.
\end{eqnarray*}
The definition of the function $g_\mathcal{E}$ as given in Section~\ref{sec:CorrFct} together with the integral calculations from Lemma~\ref{lem:ValueIntegralCalc}
then gives the function in the statement of Theorem \ref{thm:EdgePair}.\qed

%----------------------------------- Section Edge Vertex Correlations ----------------------------

\section{Edge-vertex correlations for Mondrian tessellations} \label{sec:CrossCorr}

We now turn to the proof of Theorem \ref{thm:CrossCorr} by deriving the concrete cross-covariance measure of the vertex process $\mathcal{V}_{t}$ and the random edge length measure $\mathcal{E}_{t}$ of a weighted Mondrian tessellation $Y_t$ on $\RR^2$ with driving measure $\Lambda_p$.Using the general formula for this cross-covariance measure from \cite[Theorem 3]{ST-Plane} for the driving measure $\Lambda_p$ together with the definition of the segment intersection measure in \eqref{eq:DefSegIntersecMeasure} yields
\begin{align}\label{eq:EdgeVertexCov}
	\nonumber &\Cov(  \mathcal{V}_{t},\mathcal{E}_{t})( \cdot \times \cdot )\\
	\nonumber &=
	\int_{[W]} \int_{[L \cap W]} \int_{[L \cap W]}\Bigg[\frac{1}{2}\Delta^{L(L_1,L_2)}\otimes \ell_1(\cdot \cap L(L_1,L_2) ) \ ( \cdot \times \cdot )\, \mathcal{I}^2\big(s^2 \exp(-s\Lambda_p([L(L_1,L_2)]);t\big)\\
	\\
	\nonumber &  \qquad \qquad  + \Lambda_p\big([\cdot \cap L(L_1,L_2)]\big)\otimes \ell_1(\cdot \cap L(L_1,L_2) )\,( \cdot \times \cdot )\, \mathcal{I}^2\big(s^2 \exp(-s\Lambda_p([L(L_1,L_2)]));t\big)\Bigg]\\
	& \qquad \qquad \qquad \qquad  \qquad \qquad \qquad \qquad  \qquad  \qquad   \qquad  \qquad \qquad   \times \, \Lambda_p(\dint L_1) \, \Lambda_p(\dint L_2) \, \Lambda_p(\dint L).
\end{align}
With the definition of the measure $\Lambda_p$ given in \eqref{def:Lambda} and Equations~\eqref{eq:SegmentIntersectionMeasure}, \eqref{eq:LambdaSegmentIntersection} and \eqref{eq:Delta}  yield that
\begin{align}\label{eq:CovEV}
	&  \Cov(\mathcal{V}_{t}, \mathcal{E}_{t}) ( \cdot \times \cdot ) \notag\\&=
	p (1-p)^2\int_{\RR} \int_{\RR} \int_{\RR}\Big[\frac{1}{2} \big(\delta_{(\tau,\sigma)}+ \delta_{(\vartheta, \sigma)}\big)\otimes\ell_1(\cdot\cap \overline{(\tau \vartheta)}_\sigma)\ ( \cdot \times \cdot )\,  \mathcal{I}^1\big(s^2 \exp(-s(1-p)\vert \tau-\vartheta \vert) ;t\big)\notag\\
	&\qquad\qquad \qquad  + (1-p)\Big(\ell_1(\cdot \cap \overline{(\tau \vartheta)}_\sigma )\otimes \ell_1(\cdot \cap \overline{(\tau \vartheta)}_\sigma )\Big)\,( \cdot \times \cdot )\,  \mathcal{I}^2\big(s^2 \exp(-s(1-p)\vert \tau-\vartheta \vert) ;t\big)\Big] \,\dint \tau \, \dint \vartheta \, \dint \sigma
	\notag\\
	&\quad +  p^2 (1-p)\int_{\RR} \int_{\RR} \int_{\RR}\Big[\frac{1}{2}\big(\delta_{(\sigma,\tau)}+ \delta_{(\sigma, \vartheta)}\big)\otimes \ell_1(\cdot\cap\overline{{_\sigma}(\tau \vartheta)}) \ ( \cdot \times \cdot )\,  \mathcal{I}^1\big(s^2 \exp(-sp\vert \tau-\vartheta \vert) ;t\big)\notag\\
	&\qquad\qquad \qquad  +p \Big(\ell_1(\cdot \cap {_\sigma}{\overline{(\tau \vartheta)}} ) \otimes \ell_1(\cdot \cap {_\sigma}{\overline{(\tau \vartheta)}} )\Big)\, ( \cdot \times \cdot )\, \mathcal{I}^2\big(s^2 \exp(-sp\vert \tau-\vartheta \vert) ;t\big)\Big]\, \dint \tau \, \dint \vartheta \, \dint \sigma.
\end{align}
Let $ A, B \in \cB(\RR^2) $.
We can now use Lemma~\ref{lem:IntegralExpressions}(i) to deal with the first summand in both terms, applied to the product set $A\times B$, to obtain
\begin{align}
	&\frac{ (1-p)^2}{2}\int_{\RR} \int_{\RR} \int_{\RR}\big(\delta_{(\tau,\sigma)}+ \delta_{(\vartheta, \sigma)}\big)\otimes\ell_1(\cdot\cap \overline{(\tau \vartheta)}_\sigma)(A\times B)\ \mathcal{I}^1\big(s^2 \exp(-s(1-p)\vert \tau-\vartheta \vert) ;t\big)\, \dint \tau \, \dint \vartheta \, \dint \sigma\notag\\
	\notag \\
	&\quad=p (1-p)^2\int_{\RR^2} \delta_{w}(A)\int_{\RR}\, \ell_1(B-w\cap  \overline{(0 z)}_0)\mathcal{I}^1\big(s^2 \exp(-s (1-p)|z| );t\big)\dint z \, \dint w \label{eq:DiracLebesgue1}\\
	\intertext{and} \notag
	&  \frac{p^2 (1-p)}{2}\int_{\RR} \int_{\RR} \int_{\RR}\Big(\big(\delta_{(\sigma,\tau)}+ \delta_{(\sigma, \vartheta)}\big)\otimes \ell_1(\cdot\cap\overline{{_\sigma}(\tau \vartheta)})\Big)(A\times B) \ \mathcal{I}^1\big(s^2 \exp(-sp\vert \tau-\vartheta \vert) ;t\big) \,\dint \tau \, \dint \vartheta \, \dint \sigma\notag\\
	\notag\\
	&\quad =p^2 (1-p)\int_{\RR^2} \delta_{w}(A)\int_{\RR}\, \ell_1(B-w\cap  \overline{{_0}{(0 z)}}))\ \mathcal{I}^1\big(s^2 \exp(-s p|z| );t\big) \, \dint z \, \dint w. \label{eq:DiracLebesgue2}
\end{align}
 The second summand in each of the terms in \eqref{eq:CovEV} can be dealt with using Lemma~\ref{lem:IntegralExpressions}(ii), see also Equation~\eqref{eq:DoubleIntegralLebesgue}.
As in the previous section, we want to proceed by giving the reduced covariance measure via the diagonal-shift argument in the sense of \cite[Corollary 8.1.III]{DaleyVerejones}. Plugging the terms we just deduced into \eqref{eq:CovEV}, we end up with the covariance measure
\begin{align*}
	\Cov( \mathcal{V}_{t},\mathcal{E}_{t})(A\times B)= \int_{A} \int_{B-x}\widehat{\Cov}_{\cV,\cE}(\dint y)\ \ell_2(\dint x),
\end{align*}
where the reduced cross-covariance measure is given by
\begin{align}\label{eq:ReducedCrossCovariance}
	\widehat{\Cov}_{\cV,\cE}( \, \cdot \, )= p(1-p)&\Bigg( (1-p)\int_{\RR} \ell_1(\cdot\cap\,\overline{(0z)}_0) \,\mathcal{I}^1\big(s^2 \exp(-s(1-p)\vert z \vert ;t\big) \,\dint z\notag\\
	&\quad+ p\int_{\RR} \ell_1(\cdot\cap  {_0}{\overline{(0z)}})\, \mathcal{I}^1\big(s^2 \exp(-sp\vert z \vert ;t\big)\, \dint z\notag\\
	&\quad+ (1-p)^2 \int_{\RR} \Big(\int_{ \overline{(0z)}_0}  \int_{  \overline{(0z)}_0}    \delta_{y-x}(\cdot)\, \dint x \, \dint y\Big)\,\mathcal{I}^2\big(s^2 \exp(-s(1-p)\vert z\vert) ;t\big)\,  \dint z \notag \\
	&\quad+ p^2\int_{\RR} \Big(\int_{ {_0}{\overline{(0z)}}}  \int_{  {_0}{\overline{(0z)}}}    \delta_{y-x}(\cdot)\, \dint x \, \dint y\Big)\,  \mathcal{I}^2\big(s^2 \exp(-sp\vert z\vert) ;t\big)\, \dint z \Bigg).
\end{align}
Thus, the reduced second cross moment measure has the form
\begin{align*}
		\widehat{\mathcal{K}_{\cV,\cE}}( \, \cdot \, )&=	\widehat{\Cov}_{\cV,\cE}( \, \cdot \, )+ \lambda_{\cV} \lambda_{\cE} \ell_2 (\cdot).
\end{align*}

The Mondrian cross K-function is now defined as
\begin{align}\label{eq:Cross-KFunction}
	K_{\cV,\cE}(r)=\frac{1}{\lambda_{\cV} \lambda_{\cE}} \widehat{\mathcal{K}}_{\cV,\cE}(R_{r,p}), \qquad r>0,
\end{align}
where $\lambda_\cV=t^2p(1-p) $ is the intensity of $\cV_{t}$, $\lambda_{\cE}=t$ that of $\cE_{t}$ and $R_{r,p}=rR_p=[0,r(1-p)]\times [0,rp]$.
Using Equation~\eqref{eq:ReducedCrossCovariance} we see that
\begin{align}\label{eq:K12}
K_{\cV,\cE}(R_{r,p})	&=  r^2 p(1-p)+\frac{p(1-p)}{t^3} \,\Big[ T_1(r)+(1-p)^2\, T_2(r)+p^2\, T_3(r)\Big].
\end{align}

Using Equations~\eqref{eq:IntersectionRecLineSegE1} and \eqref{eq:IntersectionRecLineSegE2} %as well as  Lemma~\ref{lem:DoubleIntegral0zDirac} 
we can simplify the the term $T_1$ %terms $ T_i,\, i=1,2,3,$ 
in \eqref{eq:K12}, and then use  \eqref{eq:DerivativeIntegralI} and \eqref{eq:DerivativeIntegralII} to conclude that
\begin{align*}
	\frac{\dint T_1(r)}{\dint r}
	&=\frac{\dint }{\dint r}\Bigg[(1-p)\Bigg(\int_0^{r(1-p)}z \, \mathcal{I}^1\big(s^2 \exp(-s(1-p)z );t\big) \,\dint z\\
	&  \qquad + r(1-p)\int_{r(1-p)}^{\infty}\, \mathcal{I}^1\big(s^2 \exp(-s(1-p)z) ;t\big) \,\dint z\Bigg)\\
	& \qquad  + p\Bigg(\int_0^{rp}z \, \mathcal{I}^1\big(s^2 \exp(-sp z)  ;t\big)\, \dint z+ rp\int_{rp}^{\infty}\, \mathcal{I}^1\big(s^2 \exp(-sp z)  ;t\big)\, \dint z\Bigg)\Bigg]\\
	&=(1-p)\Big( r(1-p)^2 \mathcal{I}^1\big(s^2 \exp(-s(1-p)^2r) ;t\big)+ (1-p) \int_{r(1-p)}^\infty \, \mathcal{I}^1\big(s^2 \exp(-s(1-p)z) ;t\big) \,\dint z\\
	&\qquad-r(1-p)^2\mathcal{I}^1\big(s^2 \exp(-s r(1-p)^2  ;t\big)\Big)+ p\Big( rp^2 \mathcal{I}^1\big(s^2 \exp(-srp^2 ) ;t\big)\\
	&\qquad+p \int_{rp}^\infty \, \mathcal{I}^1\big(s^2 \exp(-spz) ;t\big)\,\dint z -rp^2\mathcal{I}^1\big(s^2 \exp(-srp^2) ;t\big)\Big)\\
	&= (1-p)^2 \int_{r(1-p)}^\infty \, \mathcal{I}^1\big(s^2 \exp(-s(1-p)z) ;t\big)\,\dint z+ p^2 \int_{rp}^\infty \, \mathcal{I}^1\big(s^2 \exp(-spz) ;t\big)\,\dint z.
\end{align*}
For the second and third term we can evoke Equations~\eqref{DerivativeT3} and \eqref{DerivativeT2} to see that 
\begin{align*}
	\frac{\dint T_2(r)}{\dint r}&=\frac{\dint}{\dint r}\Bigg[ \int_{\RR} \Big(\int_{ \overline{(0z)}_0}  \int_{  \overline{(0z)}_0}    \delta_{y-x}(R_{r,p})\, \dint x \, \dint y\Big)\,\mathcal{I}^2\big(s^2 \exp(-s(1-p)\vert z\vert) ;t\big)\,  \dint z \Bigg]\\
	& =2(1-p) \int_{r(1-p)}^\infty \big(z-r(1-p)\big) \, \mathcal{I}^2\big(s^2 \exp(-s(1-p)z) ;t\big)\, \dint z
	\intertext{and}
	\frac{\dint T_3(r)}{\dint r}&=\frac{\dint}{\dint r}\Bigg[ \int_{\RR} \Big(\int_{ {_0}{\overline{(0z)}}}  \int_{  {_0}{\overline{(0z)}}}    \delta_{y-x}(R_{r,p})\, \dint x \, \dint y\Big)\,  \mathcal{I}^2\big(s^2 \exp(-sp\vert z\vert) ;t\big)\, \dint z \Bigg]\\
	&=2p \int_{rp}^\infty \big(z-rp\big) \, \mathcal{I}^2\big(s^2 \exp(-spz) ;t\big)\, \dint z.
\end{align*}

By plugging in the expressions for the respective integrals given in Lemma~\ref{lem:ValueIntegralCalc},  recalling Equation \eqref{eq:Cross-KFunction} we finally obtain the formula for $g_{\cV,\cE}(r)$ after simplification of the resulting expression.\qed

%----------------------------------- Section Vertex Pair Correlations ----------------------------

\section{Vertex Covariance Measure and Vertex Pair-Correlations} \label{sec:VertexPair}

In this section we derive Theorem \ref{thm:VertexPair}. As before we denote by $\mathcal{V}_{t}$ the vertex point process of a  weighted Mondrian tessellation $Y_t$ with driving measures $\Lambda_p$. As in the previous sections \cite[Theorem 2]{ST-Plane} provides us with a general formula for its covariance measure. Specialization to $\Lambda_p$ gives
\newpage
\begin{eqnarray} \label{eq:VertexPairCovMeasure}
	\nonumber &&\Cov(\mathcal{V}_{t})( \cdot \times \cdot )\\
	\nonumber  &=& \int_{\mathcal{S}(\RR^2)} \frac{1}{2} (\Delta^e \otimes \Delta^e)( \cdot \times \cdot )\, \mathcal{I}^1(s^2 \exp(-s\Lambda_p([e]));t) \ll (\Lambda_p\times\Lambda_p)\cap \Lambda_p\gg (\dint e)\\
	\nonumber && + \int_{\mathcal{S}(\RR^2)} (\Delta^e \otimes \Lambda_p([ \cdot \cap e]) + \Lambda_p([ \cdot \cap e]) \otimes \Delta^e) ( \cdot \times \cdot )\,\mathcal{I}^2(s^2 \exp(-s \Lambda_p([e]));t) \ll (\Lambda_p\times\Lambda_p)\cap \Lambda_p\gg (\dint e)\\
	&& + 4\int_{\mathcal{S}(\RR^2)} (\Lambda_p([ \cdot \cap e]) \otimes \Lambda_p([ \cdot \cap e]))( \cdot \times \cdot )\, \mathcal{I}^3(s^2 \exp(-s \Lambda([e]));t) \ll (\Lambda_p\times\Lambda_p)\cap \Lambda_p\gg (\dint e).
\end{eqnarray}%
Starting with the first of these three summands, we can use the definition of the segment intersection measure as given in \eqref{eq:DefSegIntersecMeasure} with \eqref{eq:LambdaSegmentIntersection} and \eqref{eq:Delta} to see that
\begin{eqnarray} \label{eq:DeltaDelta}
	\nonumber &&\int_{\mathcal{S}(\RR^2)} \frac{1}{2} (\Delta^e \otimes \Delta^e)( \cdot \times \cdot ) \,\mathcal{I}^1(s^2 \exp(-s\Lambda_p([e]));t) \ll (\Lambda_p\times\Lambda_p)\cap \Lambda_p\gg (\dint e)\\
	\nonumber&=&  \displaystyle \int_{[\RR^2]} \int_{[L]} \int_{[L]} \frac{1}{2} (\Delta^{L(L_1,L_2)} \otimes \Delta^{L(L_1,L_2)})( \cdot \times \cdot ) \, \mathcal{I}^1(s^2 \exp(-s\Lambda_p([L(L_1,L_2)]));t) \\
	\nonumber && \qquad \qquad \qquad \qquad \qquad \qquad \qquad  \qquad  \qquad \qquad \quad   \times\, \Lambda_p(\dint L_1) \, \Lambda_p(\dint L_2) \, \Lambda_p(\dint L) \\
	%\Lambda_p^{\otimes 3}(\dint (L,L_1,L_2))\,\\
	%\nonumber\\
	\nonumber &=&  \displaystyle p(1-p)^2 \, \int_{\RR} \int_{\RR} \int_{\RR} \frac{1}{2} \Big((\delta_{(\tau ,\sigma)} +  \delta_{(\vartheta, \sigma)}) \otimes (\delta_{(\tau ,\sigma)} +  \delta_{(\vartheta, \sigma)})\Big)( \cdot \times \cdot )
	\,\mathcal{I}^1(s^2 \exp(-s(1-p)\vert \tau - \vartheta\vert);t)\\
	\nonumber && \qquad \qquad \qquad \qquad \qquad \qquad \qquad \qquad \qquad \qquad \qquad \qquad \qquad  \qquad \qquad \qquad \qquad  \quad  \times \,\dint \vartheta \, \dint \tau\, \dint \sigma\\
	\nonumber \\
	&& +\nonumber  \, \displaystyle (1-p)p^2  \hspace{-0.25ex}\int_{\RR} \int_{\RR} \int_{\RR} \frac{1}{2} \Big((\delta_{(\sigma, \tau)} +  \delta_{(\sigma, \vartheta)}) \otimes (\delta_{(\sigma, \tau)} +  \delta_{(\sigma, \vartheta)})\Big) ( \cdot \times \cdot )\, \mathcal{I}^1(s^2 \exp(-sp\vert \tau - \vartheta\vert);t) \\
	 && \qquad \qquad \qquad \qquad \qquad \qquad \qquad \qquad \qquad \qquad \qquad \qquad \qquad \qquad \qquad \qquad  \quad    \times \, \dint \vartheta \, \dint  \tau \,\dint \sigma.
\end{eqnarray}
We now aim at giving the corresponding Mondrian analogue of the pair-correlation function of the vertex point process. As in the previous sections, we do so by giving the reduced covariance measure via a diagonal-shift argument in the sense of \cite[Corollary 8.1.III]{DaleyVerejones}.
Consider the first integral term in \eqref{eq:DeltaDelta} without its coefficient for the Borel set $ A\times B \subset\RR^2\times\RR^2$. After multiplying the Dirac measures, we only consider the first two summands that integrate over $\delta_{(\tau ,\sigma)}(A)\delta_{(\tau ,\sigma)}(B)$ and $\delta_{(\tau ,\sigma)}(A)\delta_{(\vartheta, \sigma)}(B)$, respectively, as the other two can be handled in the same fashion. Using Lemma~\ref{lem:IntegralExpressions}(iii) yields
\begin{eqnarray*}
	&&\int_{\RR} \int_{\RR} \int_{\RR} \, \delta_{(\tau ,\sigma)}(A)\delta_{(\tau ,\sigma)}(B) \,  \mathcal{I}^1\big(s^2 \exp(-s(1-p)| \tau-\vartheta| ) ;t\big) \,  \dint \vartheta \, \dint \tau \, \dint \sigma\\
	\\
	&=& \int_{\RR^2} \delta_{w}(A)\, \int_{\RR} \delta_{\mathbf{0}}(B - w)  \mathcal{I}^1\big(s^2 \exp(-s(1-p)|-z_1| ) ;t\big) \, \dint z \, \dint w
\end{eqnarray*}
and 
\begin{eqnarray*}
	&&\int_{\RR} \int_{\RR} \int_{\RR} \delta_{(\tau ,\sigma)}(A)\delta_{(\vartheta, \sigma)}(B) \, \mathcal{I}^1\big(s^2 \exp(-s(1-p)| \tau-\vartheta| ) ;t\big) \, \, \dint \vartheta \, \dint \tau \, \dint \sigma \\
	\\
	&=& \int_{\RR^2} \delta_{w}(A) \int_{\RR}  \delta_{(z, 0)}(B - w) \, \mathcal{I}^1\big(s^2 \exp(-s(1-p)|-z|) ;t\big) \, \, \dint z \, \dint w.
\end{eqnarray*}
We get the same terms from the third and fourth summand within the first integral expression, yielding
\begin{eqnarray*}
	&& 2 \int_{\RR^2} \delta_{w}(A)\, \int_{\RR}\Big( \delta_{\mathbf{0}}(B - w) +\delta_{(z, 0)}(B - w) \Big)  \mathcal{I}^1\big(s^2 \exp(-s(1-p)|z| ) ;t\big) \,\dint z\, \dint w
	\end{eqnarray*}
as the overall value of that term. The same argument, with an appropriate swap of $x-$ and $y$-coordinate, shows that the second integral in \eqref{eq:DeltaDelta} (again without its coefficient) is equal to
\begin{eqnarray*}
	&& 2 \int_{\RR^2} \delta_{w}(A)\, \int_{\RR}\Big( \delta_{\mathbf{0}}(B - w)+ \delta_{(0, z)}(B - w)\Big)  \mathcal{I}^1\big(s^2 \exp(-sp|z| ) ;t\big)   \,\dint z\,  \dint w.
\end{eqnarray*}
For the second and third term in \eqref{eq:VertexPairCovMeasure} we see that, due to \eqref{eq:DeltaE} and  \eqref{eq:LambdaSegmentIntersection}, these can be handled  with the help of Lemma~\ref{lem:IntegralExpressions}(i) and (ii), respectively, see also the handling of such terms as carried out in  Equations~\eqref{eq:DoubleIntegralLebesgue}, \eqref{eq:DiracLebesgue1} and \eqref{eq:DiracLebesgue2}. 

We again define a function in the spirit of Ripley's K-function via the reduced second moment measure $\widehat{\mathcal{K}}( \mathcal{V}_{t})(R_{r,p})$ of $R_{r,p}$, $r>0$, and the corresponding normalized derivative as
\begin{align}\label{eq:gPair}
K_{\mathcal{V}}(r)=\frac{1}{\lambda_\mathcal{V}^2} \, \widehat{\mathcal{K}}( \mathcal{V}_{t})(R_{r,p})=\frac{1}{(t^2p(1-p))^2} \, \widehat{\mathcal{K}}( \mathcal{V}_{t})(R_{r,p}). % \qquad \text{ and } \qquad g(r):=\frac{1}{2p(1-p)r} \frac{\dint K_{\mathcal{V}}(r)}{\dint r}.
\end{align} 
%
%We now aim at giving these functions concretely.
Combining the considerations above with the diagonal shift argument from \cite[Corollary 8.1., III]{DaleyVerejones} we obtain that the reduced covariance measure $ \widehat{\Cov}( \mathcal{V}_{t})$ with
\begin{align*}
	\Cov( \mathcal{V}_{t})(A\times B)= \int_{A} \int_{B-x} \widehat{\Cov}( \mathcal{V}_{t})(\dint y)\ \ell_2(\dint x),
\end{align*}
is given by
\begin{eqnarray*} %\label{eq:ReducedCovMeasure}
	\nonumber&& \widehat{\Cov} ( \mathcal{V}_{t})( \, \cdot \, )
	\nonumber\\
	\nonumber && =p(1-p)\Bigg[(1-p) \bigg(\int_{\RR} \delta_{\mathbf{0}}(\cdot)  \mathcal{I}^1\big(s^2 \exp(-s(1-p)|z| ) ;t\big) \, \dint z\\
	\nonumber && \qquad \qquad  \qquad \qquad  + \int_{\RR}  \delta_{(z, 0)}(\cdot) \, \mathcal{I}^1\big(s^2 \exp(-s(1-p)|z|) ;t\big) \,  \dint z  \bigg)\\
	\nonumber && \qquad + p \bigg( \int_{\RR} \delta_{\mathbf{0}}(\cdot)  \mathcal{I}^1\big(s^2 \exp(-sp|z| ) ;t\big)  \, \dint z+ \,  \int_{\RR}  \delta_{(0, z)}(\cdot) \, \mathcal{I}^1\big(s^2 \exp(-sp|z|) ;t\big) \,   \dint z \, \bigg)\\
	\nonumber\\
	\nonumber && \qquad  + 4\Bigg( (1-p)^2 \int_{\RR}  \,\ell_1( \cdot \cap \overline{(0  z)}_0) \, \, \mathcal{I}^2\big(s^2 \exp(-s(1-p)|z|);t\big)\,   \dint z\\
	\nonumber\\
	\nonumber && \qquad  \qquad \quad + p^2 \int_{\RR}  \,\ell_1( \cdot \cap {_0}\overline{(0  z)}) \, \, \mathcal{I}^2 \big(s^2 \exp(-sp|z|);t\big)\,   \dint z\\
	\nonumber && \qquad \qquad \quad  + (1-p)^3 \int_{\RR} \Big(\int_{ \overline{(0z)}_0}  \int_{  \overline{(0z)}_0}    \delta_{y-x}(\cdot)\, \dint x \, \dint y\Big)\,\mathcal{I}^3\big(s^2 \exp(-s(1-p)\vert z\vert) ;t\big)\,  \dint z\\
	\nonumber\\
 && \qquad \qquad \quad + p^3 \int_{\RR} \Big(\int_{ \overline{{_0}(0z)}}  \int_{  {_0}\overline{(0z)}}    \delta_{y-x}(\cdot)\, \dint x \, \dint y\Big)\,\mathcal{I}^3\big(s^2 \exp(-sp\vert z\vert) ;t\big)\,  \dint z\Bigg) \Bigg].
\end{eqnarray*}
The relationship \cite[Equation (8.1.6)]{DaleyVerejones} now yields the reduced second moment measure $\widehat{\mathcal{K}}( \mathcal{V}_{t})$ 
\begin{eqnarray*} %\label{eq:Reduced2ndMomentMeasure}
	\widehat{\mathcal{K}}( \mathcal{V}_{t})( \, \cdot \, )&=&  \widehat{\Cov}( \mathcal{V}_{t})( \, \cdot \, )+ (t^2p(1-p))^2 \ell_2( \, \cdot \, ).
\end{eqnarray*}
We now use \eqref{eq:DeltaRp}, \eqref{eq:IntersectionRecLineSegE1}, \eqref{eq:IntersectionRecLineSegE2}, and Lemma \ref{lem:DoubleIntegral0zDirac} in straight-forward calculations, to obtain
\begin{align*}\label{eq:K-FctPair}
	&\widehat{\mathcal{K}}( \mathcal{V}_{t})(R_{r,p})\notag\\ 
	 &=: (t^2p(1-p))^2 r^2 p(1-p) +p(1-p) \Big[(1-p)S_1(r)+ pS_2(r)+ 4\Big(S_3(r)+ (1-p)^3S_4(r) +p^3S_5(r)\Big)\Big].
\end{align*}
\allowdisplaybreaks
With the calculations from \eqref{eq:DerivativeIntegralI}, \eqref{eq:DerivativeIntegralII}, \eqref{DerivativeT3} and \eqref{DerivativeT2}, we get the derivatives
\begin{align*}
\frac{\dint S_1(r)}{\dint r}&=\frac{\dint }{\dint r}\Bigg[2\int_{0}^\infty \mathcal{I}^1\big(s^2 \exp(-s(1-p)z ) ;t\big) \, \dint z + \int_{0}^{(1-p)r}   \, \mathcal{I}^1\big(s^2 \exp(-s(1-p)z) ;t\big) \,  \dint z\Bigg]\\
\\
&= (1-p) \,  \mathcal{I}^1\big(s^2 \exp(-s(1-p)^2r) ;t\big)
\intertext{ and }
\frac{\dint S_2(r)}{\dint r}&=\frac{\dint}{\dint r}\Bigg[ 2\int_0^\infty \mathcal{I}^1\big(s^2 \exp(-spz ) ;t\big) \,\dint z + \,  \int_{0}^{pr}   \, \mathcal{I}^1\big(s^2 \exp(-spz) ;t\big) \, \dint z\Bigg]= p \,  \mathcal{I}^1\big(s^2 \exp(-sp^2r) ;t\big).
\end{align*}
The other three terms can be handled in the same way as the terms $ T_1 $, $ T_2 $ and $ T_3  $ in Section~\ref{sec:CrossCorr}. More precisely, $ S_3 $ can be dealt with in the same way as $ T_1 $, $ S_4 $ as $ T_2 $ and $ S_5 $ as $ T_3 $. Finally,  recalling $K_{\mathcal{V}}(r)$ and 
$g_{\mathcal{V}}(r)$ from \eqref{eq:gPair}, and using the concrete integral calculations from Lemma \ref{lem:ValueIntegralCalc}, we conclude the proof of Theorem \ref{thm:VertexPair}.\qed
\section*{Acknowledgements}
The authors would like to thank Claudia Redenbach for providing the helpful images of planar Mondrian tesselations with different weights in Figure \ref{fig:WeightedMondrian}. CB and CT were supported by the DFG priority program SPP 2265 \textit{Random Geometric Systems}.

\addcontentsline{toc}{section}{References}

%\printbibliography

\bibliographystyle{abbrv}
\bibliography{RectSTITsBibliography}
\end{document}